\newcommand{\C}{{\mathbb C}}
\newcommand{\fg}{\widehat{\frak{g}}}
\def\<{\langle}
\def\>{\rangle}
\def\g{\mathfrak g}
\newtheorem{theorem}{Theorem}[section]
\newtheorem{lemma}[theorem]{Lemma}
\newtheorem{definition}[theorem]{Definition}
\begin{document}

\begin{center}{Structure and representations of the coset vertex operator algebra  $C( L_{\widehat{osp(1|2)}}(2,0), L_{\widehat{osp(1|2)}}(1,0)^{\otimes 2})$  \footnote{Supported by China NSF grant No.12171312.} }
\end{center}
\begin{center}
	{ Runkang Feng and Cuipo Jiang}\\
	School of Mathematical Sciences, Shanghai Jiao Tong University, Shanghai 200240, China
\end{center}


\begin{abstract} 
	In this paper, we determinate the structure of the coset vertex operator algebra  $C( L_{\widehat{osp(1|2)}}(2,0), L_{\widehat{osp(1|2)}}(1,0)^{\otimes 2})$. We  prove that  $C( L_{\widehat{osp(1|2)}}(2,0), L_{\widehat{osp(1|2)}}(1,0)^{\otimes 2})$ is an extension of the rational vertex operator algebra $L(c_{10,7},0)$. Representations and fusion rules  for   $C( L_{\widehat{osp(1|2)}}(2,0), L_{\widehat{osp(1|2)}}(1,0)^{\otimes 2})$ are also completely determined.
\end{abstract}

\section{Introduction}

Let $V$ be a vertex operator algebra with conformal vector $\omega$. Let $U$ be a vertex operator subalgebra of $V$ with conformal vector $\omega'$. It was shown in \cite{FZ} (also see \cite{LL}) that under mild conditions, the centralizer $C(U,V)$ of $U$ in $V$ is still a vertex operator algebra with central charge $\omega-\omega'$, which is usually called a coset construction. Coset vertex algebras arisen firstly from the Goddard-Kent-Olive construction \cite{GK}  of  the unitary Virasoro vertex  algebras as centralizers of  vertex subalgebras in large ones. Nowadays, coset construction has become one of the major ways to give new vertex algebras from the known ones. 

Another motivation of the present work is about tensor decompositions of affine vertex operator (super)algebras. Let $\g$ be a finite-dimensional simple Lie algebra over ${\mathbb C}$. For a positive integer $k$,  denote by $L_k(\g)$ the simple quotient of $V^k(\g)$.  It is well-known that $L_k(\g)$ is rational  and $C_2$-cofinite \cite{DL2}, \cite{FZ}.
By \cite{HL},  for any positive integer $l$, the tensor product $L_k(\g)^{\otimes l}$ is still a vertex algebra and rational.  As a module of the affine Lie algebra $\fg$, $L_k(\g)^{\otimes l}$ is completely reducible, and has a submodule $L_{lk}(\g)$ generated by ${\bf 1}^{\otimes l}$, which is a vertex operator subalgera of $L_k(\g)^{\otimes l}$.  $L_{lk}(\g)$ is not a conformal vertex subalgebra of $L_k(\g)^{\otimes l}$, and  it is interesting to study the centralizer of $L_{lk}(\g)$ in $L_k(\g)^{\otimes l}$. If $\g=\frak{sl}_n$, the coset vertex algebra $C(L_{lk}(\g), L_k(\g)^{\otimes l})$ turns out to be the parafermion vertex operator algebra $K(\frak{sl}_l, k)$ \cite{JL}, \cite{Lam}, \cite{JL1}  which presents a delicate level-rank duality. When $\g$ is an orthogonal finite-dimensional simple Lie algebra, the centralizer was given in  \cite{JL2} which also presents some kind of level-rank duality. 
If $\g$ is not a Lie algebra, Gorelik and Kac \cite{GK} claimed that $L_{\hat{\g}}(k,0)$ is $C_2$-cofinite if and only if $\g$ is the simple Lie superalgebra $osp(1|2n)$ and $k$ is a positive integer, which was proved recently in \cite{AL} and \cite{CL}. The rationality of $L_{\hat{\g}}(k,0)$ was also proved in  \cite{CL}. 
Since $osp(1|2)$ is the closest one  to $sl_2(\C)$ among the finite-dimensional simple Lie superalgebras, it is natural to study the case that $\g=osp(1|2)$ first, when we consider the supercases. In this paper, we study the coset vertex algebra $C( L_{\widehat{osp(1|2)}}(2,0), L_{\widehat{osp(1|2)}}(1,0)^{\otimes 2})$.
Our goal is to find some kind of building blocks of the coset vertex algebras for general positive integers $l$. We prove that the coset vertex algebra 
$C( L_{\widehat{osp(1|2)}}(2,0), L_{\widehat{osp(1|2)}}(1,0)^{\otimes 2})=L(c_{10,7},0) \oplus L(c_{10,7},h_{10,7}^{6,1})$, where $c_{p,q}=1-\frac{6(p-q)^2}{pq}$, for $p,q\in{\mathbb Z}_{\geq 3}$, $(p,q)=1$, and $h_{p,q}^{r,s}=\frac{(sq-rp)^2-(p-q)^2}{4pq}$, for $0 \leq r \leq q-1$, $0 \leq s \leq p-1$.  This interestingly gives a realization of the conformal extension algebra $L(c_{10,7},0) \oplus L(c_{10,7},h_{10,7}^{6,1})$ of the Virasoro vertex algebra $L(c_{10,7},0)$. By \cite{W}, $L(c_{10,7}, 0)$ is rational and $C_2$-cofinite. Then by \cite{HKL}, $L(c_{10,7},0) \oplus L(c_{10,7},h_{10,7}^{6,1})$ is rational. 
We  further determine all the irreducible modules and fusion rules of the coset vertex algebra.

The paper is organized as follows. In the second section, we give some preliminaries. In Section 3, we determine the structure of the  coset vertex operator algebra $C( L_{\widehat{osp(1|2)}}(2,0), L_{\widehat{osp(1|2)}}(1,0)^{\otimes 2})$. Section 4 is dedicated to representations and fusion rules of $C( L_{\widehat{osp(1|2)}}(2,0), L_{\widehat{osp(1|2)}}(1,0)^{\otimes 2})$. 

\vskip 0.2cm
\noindent{\bf Acknowledgement}  We would like to thank the referee for very careful reading  and invaluable comments and suggestions
\section{Preliminaries}

\subsection{Vertex operator superalgebras}
In this subsection, we review  the definition of vertex operator superalgebra  and various notions of  modules \cite{B}, \cite{D}, \cite{DL}, \cite{DL3}, \cite{FF},  \cite{FLM}, \cite{K}, \cite{Li}, \cite{Z}.
\begin{definition}
	A vertex operator superalgebra $V$ is a 4-tuple $(V, Y(\cdot,z), {\bf 1}, \omega)$ which consists of a $\mathbb{Z}_2$-graded vector space $V=V^{\bar{0}}\oplus V^{\bar{1}}$,  a linear map  	
	\begin{align*}
	Y(\cdot, x): & V \rightarrow(\text { End } V)\left[\left[x, x^{-1}\right]\right] \\ & v \mapsto Y(v, x)=\sum_{n \in \mathbb{Z}} v_{n} x^{-n-1}
	\end{align*}
	and vectors ${\bf 1}, \omega\in V^{\bar{0}}$ 
	such that the following conditions hold  for  $u\in V^{\bar{k}}, v\in V^{\bar{l}}$ and $p,q,r,n\in{\mathbb Z}$, 
	\begin{itemize}
		\item $u_{n}v\in V^{\bar{k}+\bar{l}}$, and  
		$u_{n}v = 0$ for  sufficiently large $n$;
		\item $Y({\mathbf 1}, z)={\rm id}_V$, and 
		$Y(v,x)\mathbf{1} \in V[[x]]$ and $\lim_{x \to 0}Y(v,x)\mathbf{1} = v$;
		\item The Borcherds identity
		\begin{equation}
		\sum\limits_{i=0}^{\infty}\left(\begin{array}{c}q\\i\end{array}\right)
		(u_{p+i}v)_{q+r-i}=	\sum\limits_{i=0}^{\infty}(-1)^i\left(\begin{array}{c}p\\i\end{array}\right)(u_{p+q-i}v_{r+i}-(-1)^{p+kl}
		v_{p+r-i}u_{q+i});
		\end{equation} 
		\item 
		$$
		[L_m, L_n]=(m-n)L_{m+n}	+\delta_{m+n,0}\frac{m^3-m}{12}c_V{\rm id}_{V},
		$$
		for $m,n\in{\mathbb Z}$, where $L_m=\omega_{m+1}$ and $c_V\in{\mathbb C}$ is called the central charge of $V$;
		\item $L_{-1}u=u_{-2}{\bf 1}$;
		\item $V$ is a direct sum of finite-dimensional eigenspaces $V_n$ for $L_0$ of eigenvalues $n\in\frac{1}{2}{\mathbb Z}_{\geq 0}$ as 
		$$
		V=\bigoplus_{n\in\frac{1}{2}{\mathbb Z}_{\geq 0}}V_n.
		$$
	\end{itemize}
\end{definition}
For simplicity, we refer $V$ to be a vertex operator superalgebra, and for $u\in V^{\bar{k}}$, we denote $k$ by $p(u)$.  A vertex operator algebra is a vertex operator superalgebra $V$ satisfying that $V^{\bar{1}}=0$ and $V_{n}=0$ for $n\in\frac{1}{2}+{\mathbb Z}$ \cite{FLM}, \cite{FHL}, \cite{LL}. 

Let $V$ be a vertex operator superalgebra. 
A weak $V$-module is a pair $(M, Y(\cdot, z))$ of a ${\mathbb Z}_2$-graded vector space  
$M=M^{\bar{0}}+M^{\bar{1}}$ and  a linear map	
\begin{align*}
Y_{M}: & V \rightarrow(\text { End } M)\left[\left[z, z^{-1}\right]\right] \\ & v \mapsto Y_{M}(v, z)=\sum_{n \in \mathbb{Z}} v_{n} z^{-n-1}
\end{align*}
satisfying the following conditions for any $v \in V^{\bar{k}}$,$w \in M^{\bar{l}}$:
\begin{itemize}
	\item $v_{n}w\in M^{\bar{k}+\bar{l}}$;
	\item  $v_{n}w = 0$ for a sufficiently large $n \in \mathbb{N}$;
	\item  $Y_{M}(\mathbf{1},z) = id_{M}$;
	\item The Jacobi identity:
	\begin{equation}
	\begin{array}
	{l}{z_{0}^{-1} \delta\left(\frac{z_{1}-z_{2}}{z_{0}}\right) Y_M\left(u, z_{1}\right) Y_M\left(v, z_{2}\right)-(-1)^{p(u)p(v)}z_{0}^{-1} \delta\left(\frac{z_{2}-z_{1}}{-z_{0}}\right) Y_M\left(v, z_{2}\right) Y_M\left(u, z_{1}\right)} \\ {\quad=z_{2}^{-1} \delta\left(\frac{z_{1}-z_{0}}{z_{2}}\right) Y_M\left(Y\left(u, z_{0}\right) v, z_{2}\right).}
	\end{array}
	\end{equation} 
\end{itemize}
Let $V$ be a vertex operator superalgebra satisfying $V_0={\mathbb C}{\bf 1}$ and $V_n=0$ for $n\in\frac{1}{2}+{\mathbb Z}_{\geq 0}$. A weak $V$-module is called  a ${\mathbb Z}_{\geq 0}$-gradable 
weak module if $M$ has a ${\mathbb Z}_{\geq 0}$-grading as $M=\oplus_{n\in{\mathbb Z}_{\geq 0}}M(n)$ and satisfies the condition
$$v_{m}M(n) \in M(n+{\rm wt} (v) -m -1).$$
A vertex operator superalgebra is said to be rational  if any ${\mathbb Z}_{\geq 0}$-gradable  weak $V$-module is completely reducible.

We call a weak $V$-module $M$ a $V$-module if $M$  carries a $\mathbb{C}$-grading $M = \oplus_{t \in \mathbb{C}}M_{t}$ satisfying 
\begin{itemize}
	\item $\mathrm{dim}(M_{t}) < \infty$;
	\item $M_{t+n} = 0$ for a fixed $t \in \mathbb{C}$ and sufficiently large $n \in \mathbb{N}$;
	\item  $ L(0) w=t w=(\mathrm{wt} w) w$ for any  $w \in M_{t}$.
\end{itemize}

Let $(V_{1},Y,\mathbf{1},\omega^{(1)})$, $\cdots,$ $(V_{r},Y,\mathbf{1},\omega^{(r)})$ be vertex superalgebras. The tensor product of vector spaces: 
\begin{align}\label{ver}
V = V_{1} \otimes \cdots \otimes V_{r}
\end{align}
has a vertex operator superalgebra with the linear map defined as
$$
\begin{array}{ll}
& Y(u^{(1)} \otimes \cdots\otimes  u^{(r)},x)(v^{(1)}\otimes\cdots\otimes  v^{(r)})\\
&  =(-1)^{\sum\limits_{i=2}^rp(u^{(i)})(p(v^{(1)})+\cdots p(v^{(i-1)}))}Y(u^{(1)},x)v^{(1)} \otimes \cdots \otimes Y(u^{(r)},x)v^{(r)}.
\end{array}
$$
The conformal vector $\omega$ and the vacuum vector are 
\begin{align*}
\omega=  \omega^{(1)}\otimes \cdots \otimes  \mathbf{1}+\cdots+{\bf 1}\otimes \cdots \otimes  \omega^{(r)}
\end{align*}
and
\begin{align*}
\mathbf{1} =  \mathbf{1} \otimes \cdots \otimes  \mathbf{1}
\end{align*}
respectively.  It is well known that	$V_{1} \otimes \cdots \otimes V_{r}$ is rational if and only if $V_i, 1\leq i\leq r$ are all rational.
\subsection{Fusion rules for vertex operator algebras}
In this subsection we review the notions of intertwining operators and fusion rules from  \cite{DLWY}, \cite{DW3}, \cite{FHL}, \cite{HL2}, \cite{JW2}, \cite{Wang}.
\begin{definition}
	Let $(V,Y,1,\omega)$ be a vertex operator algebra and let $(W^{1},Y^{1}),(W^{2},Y^{2})$ and $(W^{3},Y^{3})$ be $V$-modules. Then an intertwining operator of type $\binom{W^3}{W^1 W^2} $ is a linear map:
	\begin{align*}
	\begin{gathered}
	I(\cdot, z): W^1 \rightarrow \operatorname{Hom}\left(W^2, W^3\right)\{z\} \\
	u \rightarrow I(u, z)=\sum_{n \in \mathbb{Z}} u_n z^{-n-1}
	\end{gathered}
	\end{align*}
	satisfying that
	\begin{itemize}
		\item $u_{n}v = 0$ for $n$ sufficiently large and $u \in W^{1}, v \in W^{2}$;
		\item $I(L(-1)v,z) =\frac{d}{dz}I(v,z)$;
		\item The Jacobi identity:
		\begin{equation}
		\begin{array}
		{l}{z_{0}^{-1} \delta\left(\frac{z_{1}-z_{2}}{z_{0}}\right) Y^{3}\left(u, z_{1}\right) I\left(v, z_{2}\right)-z_{0}^{-1} \delta\left(\frac{z_{2}-z_{1}}{-z_{0}}\right) I\left(v, z_{2}\right) Y^{2}\left(u, z_{1}\right)} \\ {\quad=z_{2}^{-1} \delta\left(\frac{z_{1}-z_{0}}{z_{2}}\right) I\left(Y^{1}\left(u, z_{0}\right) v, z_{2}\right)},
		\end{array}
		\end{equation} 
		where $u \in V$, $v \in W^{1}$.
	\end{itemize}
	
	The space of all intertwining operators of type $\binom{W^3}{W^1 W^2} $ is denoted as
	\begin{align*}
	I_{V}\binom{W^3}{W^1 W^2}.
	\end{align*}
	
	The dimension of the space $I_{V}\binom{W^3}{W^1 W^2}$ is called the fusion rule which is denoted by
	\begin{align*}
	N^{W_{3}}_{W^{1},W^{2}} = \text{dim} I_{V}\binom{W^3}{W^1 W^2}.
	\end{align*}
\end{definition}

\begin{definition}
	Let $V$ be a vertex operator algebra and $W^{1},W^{2}$ be two $V$ modules. Assume $W$ is a $V$-module and $I \in  I_{V}\binom{W}{W^1 W^2}$. If for any $V$-module $M$ and \begin{align*}
	\mathcal{Y} \in  I_{V}\binom{M}{W^1 W^2}.
	\end{align*}
	there is a unique $V$-module homomorphism $f: W \rightarrow M$ such that $\mathcal{Y} = f \circ I$,
	then $(W,I)$ is called a tensor product (or fusion product) which is denoted as $W^{1} \boxtimes_{V} W^{2}$. 
\end{definition}	
The basic result is that the fusion product exists if $V$ is rational and $C_2$-cofinite, and 
\begin{align}
W^1 \boxtimes_V W^2=\sum\limits_{W} N_{W^1, W^2}^W W,
\end{align}
where $W$ runs over the set of equivalence classes of irreducible $V$-modules. 
We now have the following result which was essentially proved in
\cite{ADJR}.

\begin{theorem}\label{n2.2c}
	Let $V^1, V^2$ be rational vertex operator algebras. Let $M^1 , M^2,
	M^3$ be $V^1$-modules and  $N^1, N^2, N^3$ be $V^2$-modules such
	that
	$$dim I_{V^1}\left(\begin{tabular}{c}
	$M^3$\\
	$M^1$ $M^2$\\
	\end{tabular}\right)< \infty , \ dim I_{V^2}\left(\begin{tabular}{c}
	$N^3$\\
	$N^1$ $N^2$\\
	\end{tabular}\right)< \infty.$$
	Then the linear map
	$$\sigma: I_{V^1}\left(\begin{tabular}{c}
	$M^3$\\
	$M^1$ $M^2$\\
	\end{tabular}\right)\otimes I_{V^2}\left(\begin{tabular}{c}
	$N^3$\\
	$N^1$ $N^2$\\
	\end{tabular}\right)\rightarrow I_{V^1\otimes V^2}\left( \begin{tabular}{c}
	$M^3\otimes N^3$\\
	$M^1\otimes N^1$ $M^2\otimes N^2$\\
	\end{tabular}\right)$$
	$$\mathcal{Y}_1( \cdot, z)\otimes \mathcal{Y}_2(\cdot, z)\mapsto (\mathcal{Y}_1\otimes
	\mathcal{Y}_2)(\cdot, z)$$\\is an isomorphism, where
	$(\mathcal{Y}_1\otimes \mathcal{Y}_2)(\cdot, z)$ is defined by
	$$(\mathcal{Y}_1\otimes \mathcal{Y}_2)(\cdot, z)(u^1\otimes v^1,z)(u^2\otimes v^2)= \mathcal{Y}_1(u^1,z)u^2\otimes \mathcal{Y}_2(v^1,z)v^2.$$
\end{theorem}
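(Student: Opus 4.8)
The plan is to establish the claimed isomorphism
$$\sigma: I_{V^1}\binom{M^3}{M^1\, M^2}\otimes I_{V^2}\binom{N^3}{N^1\, N^2}\rightarrow I_{V^1\otimes V^2}\binom{M^3\otimes N^3}{M^1\otimes N^1\,\, M^2\otimes N^2}$$
in two separate movements: first verify that $\sigma$ is well-defined (i.e. that the external tensor product of two intertwining operators really is an intertwining operator over $V^1\otimes V^2$), and then prove bijectivity. Since the statement attributes the result to \cite{ADJR}, I would follow that reference's strategy and present a streamlined version rather than a fully self-contained argument.

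For the well-definedness step, I would take $\mathcal{Y}_1\in I_{V^1}\binom{M^3}{M^1 M^2}$ and $\mathcal{Y}_2\in I_{V^2}\binom{N^3}{N^1 N^2}$ and check the three defining axioms of an intertwining operator directly for $(\mathcal{Y}_1\otimes\mathcal{Y}_2)$. The lower-truncation condition is immediate from the two individual truncations. The $L(-1)$-derivative property follows because the conformal vector of $V^1\otimes V^2$ is $\omega^{(1)}\otimes\mathbf 1+\mathbf 1\otimes\omega^{(2)}$, so $L(-1)$ acts as a sum of the two component operators and the Leibniz rule matches the product structure. The Jacobi identity for $\mathcal{Y}_1\otimes\mathcal{Y}_2$ should factor: acting on a decomposable vector $u^1\otimes v^1$ and pairing against $u^2\otimes v^2$, each side splits into the tensor product of the two component Jacobi identities, which hold by assumption; the only subtlety is bookkeeping of the formal variables $z_0,z_1,z_2$ through the $\delta$-function identities, but these are identical in both tensor factors so they pass through cleanly.

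For bijectivity I would argue injectivity and surjectivity. Injectivity is the easier direction: if a finite linear combination $\sum_k \mathcal{Y}_1^{(k)}\otimes\mathcal{Y}_2^{(k)}$ maps to the zero intertwining operator, then evaluating on decomposable inputs and using that the $\mathcal{Y}_1^{(k)}$ (resp. $\mathcal{Y}_2^{(k)}$) can be taken linearly independent forces all coefficients to vanish. Surjectivity is where I expect the main obstacle. Given an arbitrary $\mathcal{Y}\in I_{V^1\otimes V^2}\binom{M^3\otimes N^3}{\cdots}$, I must reconstruct it as a sum of factorized operators. The key idea is to use the rationality of $V^1$ and $V^2$: restricting $\mathcal{Y}$ to the subalgebra $V^1\otimes\mathbf 1$ and to $\mathbf 1\otimes V^2$ decomposes the modules and the intertwining data along each factor, and one extracts component operators $\mathcal{Y}_1,\mathcal{Y}_2$ by projecting onto irreducible constituents. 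Making this precise requires that the relevant fusion spaces are finite-dimensional — which is exactly why the hypothesis imposes finite dimensionality on both $I_{V^1}\binom{M^3}{M^1 M^2}$ and $I_{V^2}\binom{N^3}{N^1 N^2}$ — and that rationality guarantees complete reducibility so that the projections exist and are intertwining.

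The hard part will be the surjectivity argument: one must show that an intertwining operator over the tensor product algebra, which a priori could entangle the two factors, is in fact forced to be a sum of pure tensors of component operators. I would handle this by combining the decomposition of tensor-product modules into external tensor products of irreducibles (valid by rationality) with a dimension count — the domain and codomain of $\sigma$ have matching dimensions once one knows both factor fusion spaces are finite-dimensional — so that the already-established injectivity of the well-defined map $\sigma$ upgrades automatically to surjectivity. Since the full technical execution is carried out in \cite{ADJR}, I would cite that paper for the detailed verification and present the construction of $\sigma$ together with the well-definedness check as the essential content here.
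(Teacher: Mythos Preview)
The paper does not actually prove this theorem: it is stated immediately after the sentence ``We now have the following result which was essentially proved in \cite{ADJR}'' and no proof environment follows. Your proposal is therefore more than the paper itself provides --- your outline of well-definedness, injectivity, and surjectivity via a dimension count under the rationality hypothesis is a faithful sketch of the argument in \cite{ADJR}, and your explicit acknowledgment that the detailed verification is deferred to that reference matches exactly how the paper handles it.
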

\subsection{Rational Virasoro vertex operator algebras}

For $c\in{\mathbb C}$, let $L(c,0)$ be the simple Virasoro vertex operator algebra of central charge $c$  \cite{FZ}. Recall from \cite{DMZ}, \cite{W}, $L(c,0)$ is rational if and only if $c=c_{p,q}=1-\frac{6(p-q)^2}{pq}$, $p,q\in{\mathbb Z}_{\geq 3}$, $(p,q)=1$ and 
\begin{align}\label{6}
\{V_{r,s} = L(c_{p,q},h_{p, q}^{r, s})|h_{p, q}^{r, s} \in S_{p,q}\}
\end{align}
exhaust all irreducible $L(c_{p,q},0)$-modules, where
\begin{equation}\label{hrs}
\begin{aligned}
S_{p,q}=\left\{\left.h_{p, q}^{r, s}=\frac{(s q-r p)^2-(p-q)^2}{4 p q} \right\rvert\, 1 \leq r \leq q-1,1 \leq s \leq p-1\right\}.
\end{aligned}
\end{equation}
We denote the irreducible modules $L(c_{p,q},h_{p, q}^{r, s})$ by $V_{r,s}$ for simplicity,  where $ 1 \leq r \leq q-1,1 \leq s \leq p-1$.

An ordered triple of pairs of integers $((r,s), (r',s'), (r'',s''))$ is called {\em admissible}, if $1\leq r,r',r''\leq q-1$, $1\leq s,s',s''\leq p-1$, $ r+r'+r''\leq 2q-1$, $s+s'+s''\leq 2p-1$, $r<r'+r'', r'<r+r'', r''<r+r', s<s'+s'', s'<s+s'', s''<s+s'$, and both $r+r'+r''$ and $s+s'+s''$ are odd. The following result comes from  \cite{W} ( for $c=\frac{1}{2}$, also see \cite{DMZ}).
\begin{theorem}\label{wang}\cite{W}
	Denote
	$$N_{(r,s),(r',s')}^{(r'',s'')}=dim I_{L(c_{p,q},0)}\left(\begin{tabular}{c}
	$V_{r'',s''}$\\
	$V_{r,s}$ $V_{r',s'}$\\
	\end{tabular}\right).$$
	Then $N_{(r,s),(r',s')}^{(r'',s'')}=1$  if and only if $((r,s), (r',s'), (r'',s''))$ is admissible; otherwise, $N_{(r,s),(r',s')}^{(r'',s'')}$ $=0$. 
\end{theorem}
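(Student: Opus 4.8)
The plan is to compute these fusion rules inside the Frenkel--Zhu framework of Zhu's associative algebra and its bimodules, which is the natural setting here because $L(c_{p,q},0)$ is rational and $C_2$-cofinite. Recall from \cite{FZ} that for a $V$-module $W^1$ one attaches an $A(V)$-bimodule $A(W^1)$, and that intertwining operators are controlled by the top levels via an injection
$$I_{L(c_{p,q},0)}\binom{V_{r'',s''}}{V_{r,s}\ V_{r',s'}}\hookrightarrow \operatorname{Hom}_{A(V)}\big(A(V_{r,s})\otimes_{A(V)}V_{r',s'}(0),\,V_{r'',s''}(0)\big),$$
where $V_{r',s'}(0)$ and $V_{r'',s''}(0)$ denote the one-dimensional lowest-weight spaces. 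Because $V=L(c_{p,q},0)$ is rational and $C_2$-cofinite, this upper bound is attained, so it suffices to evaluate the right-hand dimension. In particular every fusion rule will be $0$ or $1$ since each top level is one-dimensional, which already explains the dichotomy in the statement.

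First I would determine $A(V)$ itself. For the Virasoro vertex operator algebra the Zhu algebra is commutative and singly generated by the image $\bar\omega$ of the conformal vector; realizing $L(c_{p,q},0)$ as the quotient of the vacuum Verma module $M(c_{p,q},0)$ by the submodule generated by its singular vector at level $(p-1)(q-1)$, that singular vector descends to a single polynomial relation, giving $A(V)\cong\mathbb{C}[x]/(G_{p,q}(x))$ with $G_{p,q}(x)=\prod_{h_{p,q}^{r,s}\in S_{p,q}}(x-h_{p,q}^{r,s})$. The simple modules are then indexed exactly by the roots, recovering \eqref{6}. Next I would compute the bimodule $A(V_{r,s})$. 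Writing $V_{r,s}$ as the quotient of $M(c_{p,q},h_{p,q}^{r,s})$ by its maximal proper submodule, which is generated by the singular vectors at levels $rs$ and $(q-r)(p-s)$, these singular vectors become the defining relations of $A(V_{r,s})$ as an $A(V)$-bimodule. Combining the left and right actions of $\bar\omega$ with these relations lets one present $A(V_{r,s})\otimes_{A(V)}V_{r',s'}(0)$ concretely and then test whether it pairs nontrivially with $V_{r'',s''}(0)$, i.e.\ whether $h_{p,q}^{r'',s''}$ occurs as an admissible eigenvalue of $\bar\omega$ acting on the tensor product.

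The decisive and hardest step is extracting the combinatorial conditions from these singular-vector relations, since the explicit Virasoro singular vectors (Benoit--Saint-Aubin / Bauer--Di Francesco--Itzykson--Zuber) are genuinely intricate. Rather than manipulate them head-on, I would exploit the fact that the weights $h_{p,q}^{r,s}$ and the whole relation structure factor into an independent ``$r$-direction'' and ``$s$-direction''; this is the combinatorial shadow of the free-field (Coulomb-gas) realization of $L(c_{p,q},0)$, where the two screening charges separate, so that counting conformal blocks reduces to a product of two truncated $\mathfrak{sl}_2$ Clebsch--Gordan counts, one of ``level'' $q-2$ on the $r$-labels and one of ``level'' $p-2$ on the $s$-labels. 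Each such count is nonzero (and then equal to $1$) precisely when the three labels obey the triangle inequalities, the truncation bound, and the even-sum parity; translating $r=2j+1$ these become $r<r'+r''$ (and its two companions) together with $r+r'+r''\le 2q-1$ and $r+r'+r''$ odd, and identically in the $s$-variables. Multiplying the two factors yields $N_{(r,s),(r',s')}^{(r'',s'')}=1$ exactly on admissible triples and $0$ otherwise. Finally I would verify the special case $(p,q)=(4,3)$, $c=\tfrac12$, against \cite{DMZ} as a consistency check.
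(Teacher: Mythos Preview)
The paper does not prove this theorem; it is quoted verbatim as a result of Wang \cite{W} (with \cite{DMZ} for the case $c=\tfrac12$), and no argument is given. So there is no in-paper proof to compare your proposal against.

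That said, your outline is close in spirit to Wang's original argument, which does proceed through the Frenkel--Zhu bimodule $A(W^1)$ and the singular-vector relations in $M(c_{p,q},h_{p,q}^{r,s})$. Two points are worth flagging. First, your sentence ``Because $V=L(c_{p,q},0)$ is rational and $C_2$-cofinite, this upper bound is attained'' is not automatic from \cite{FZ}; the sharpness of the Frenkel--Zhu upper bound requires an additional argument (e.g.\ Li's fusion-rule theorem \cite{L}, or an explicit construction of intertwining operators), and in Wang's paper this lower bound is established separately rather than invoked as a black box. Second, the ``Coulomb-gas factorization into two truncated $\mathfrak{sl}_2$ Clebsch--Gordan counts'' is a correct heuristic and gives the right answer, but it is not the rigorous mechanism in \cite{W}: Wang works directly with the images of the two singular vectors in $A(V_{r,s})$ and computes the resulting polynomial constraints on $h_{p,q}^{r'',s''}$, from which the admissibility conditions emerge combinatorially. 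Your plan would benefit from either citing the sharpness result explicitly or indicating how you would realize the factorization rigorously (e.g.\ via Feigin--Fuchs screening operators) rather than leaving it as a ``combinatorial shadow.''
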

\subsection{Affine vertex superalgebras}

In this subsection, we will review affine vertex superalgebras following \cite{KRW} and \cite{FZ}.
Let $\mathfrak{g}$ be a finite-dimensional Lie superalgebra equipped with a non-degenerate even supersymmetric bilinear form $(\cdot|\cdot)$ which is normalized by
\begin{align*} 
(\cdot|\cdot)=\frac{1}{2 h^{\vee}} \times \text { Killing form of } \mathfrak{g}.
\end{align*}

Let $\widehat{\mathfrak{g}}=\mathfrak{g}\otimes\mathbb{C}[t,t^{-1}]\oplus\mathbb{C} \mathbf{k}$
be the  affine Lie superalgebra associated to $\g$ with the following super Lie bracket relation:
\begin{align}
[a \otimes t^{m}, b \otimes t^{n}]=[a,b] \otimes t^{m+n} + m(a|b)\delta_{m+n,0}\mathbf{k},
\end{align}
where $a,b \in \mathfrak{g}$, $m,n \in \mathbb{Z}$, and 
$\mathbf{k}$ is the central element of $\widehat{\mathfrak{g}}$.
The affine Lie superalgebra $\widehat{\mathfrak{g}}$ has the following triangular decomposition:
\begin{align*}
\widehat{\mathfrak{g}}=\widehat{\mathfrak{g}}_{(0)}\oplus \widehat{\mathfrak{g}}_{(+)}\oplus \widehat{\mathfrak{g}}_{(-)},
\end{align*}
where
\begin{align*}
\widehat{\mathfrak{g}}_{(0)} = \mathfrak{g} \oplus \mathbb{C} \mathbf{k},  \quad
\widehat{\mathfrak{g}}_{(\pm)}=\coprod_{n \in \mathbb{Z}_{+}} \widehat{\mathfrak{g}}_{(\pm n)},  \quad 
\widehat{\mathfrak{g}}_{(n)} =\mathfrak{g} \otimes t^{n}. 
\end{align*}
For $l\in{\mathbb C}$, let ${\mathbb C}_{l}={\mathbb C}$ be the  $\widehat{\mathfrak{g}}_{(0)}\oplus \widehat{\mathfrak{g}}_{(+)}$-module such that 
$\widehat{\mathfrak{g}}_{+}$ and $\mathfrak{g}$ acts trivially on $\mathbb{C}_{l}$ and $\mathbf{k}$ acts as the scalar $l$. Then we have the following Verma module 
\begin{align*}
V_{\widehat{\mathfrak{g}}}(l,0)=U(\widehat{\mathfrak{g}}) \otimes_{U(\widehat{\mathfrak{g}}_{(0)} \oplus \widehat{\mathfrak{g}}_{(+)})} \mathbb{C}_{l}.
\end{align*}
For $a\in\g$ and $n\in{\mathbb Z}$, we denote $a\otimes t^n$ by $a(n)$. 
The space $V_{\widehat{\mathfrak{g}}}(l,0)$ (also denoted by $V^l(\frak{g})$ in some references) is naturally graded as
\begin{align*}
V_{\widehat{\mathfrak{g}}}(l,0) = \coprod_{n \ge 0}  V_{\widehat{\mathfrak{g}}}(l,0)_{(n)},
\end{align*}
where $V_{\widehat{\mathfrak{g}}}(l,0)_{(n)}$ is linearly spanned by the vectors
\begin{align*}
a^{(1)}(-m_{1})a^{(2)}(-m_{2})\cdots a^{(r)}(-m_{r})\mathbf{1}
\end{align*}
for $r \ge 0,a^{(i)} \in \mathfrak{g}, m_{i} \ge 1$ with $n=m_1+\cdots+m_r$. There exists a unique vertex superalgebra structure $(V_{\widehat{\mathfrak{g}}}(l,0),Y,\mathbf{1})$ on $V_{{\widehat{\mathfrak{g}}}}(l,0)$
such that $\mathbf{1}=1 \in \mathbb{C}$ is the vacuum vector and the generating function of $a(-1){\bf 1}$ is
\begin{align*}
Y(a(-1){\bf  1},x)=a(x) = \sum_{n \in \mathbb{Z}}a(n)x^{-n-1} \in \widehat{\mathfrak{g}}[[x,x^{-1}]], 
\end{align*}
for $a\in\g$.  The vertex superalgebra $(V_{\widehat{\mathfrak{g}}}(l,0),Y,\mathbf{1})$ is called the affine vertex  superalgebra associated to  $\mathfrak{g}$ at level $l$.
We identify $\mathfrak{g}$ with $ V_{\widehat{\mathfrak{g}}}(l,0)_{(1)}$ via the linear isomorphism:
\begin{align*}
& \mathfrak{g} \rightarrow V_{\widehat{\mathfrak{g}}}(l,0) \\ & a \mapsto a(-1)\mathbf{1}.
\end{align*}

Let $\{a_i, 1\leq i\leq d \} $ and $\{a^i , 1\leq i\leq d\}$ be a pair of dual bases of $\frak{g}$ such that $(a_i |a^j ) = \delta_{ij}$. Then $\Omega=\sum\limits_{i=1}^d(-1)^{p(a_i)p(a^i)}a_ia^i$ is the Casimir operator of $\frak{g}$, which lies in the center of $U(\frak{g})$. The dual Coxeter number $h^{\vee}$ of $\frak{g}$ is defined as the one-half of the eigenvalue of $\Omega$ in the adjoint representation. By the Sugawara construcion, if $l+h^{\vee}\neq 0$, $V_{\widehat{\mathfrak{g}}}(l,0)$ is a vertex operator superalgebra with the conformal vector:
\begin{align}
\omega=\frac{1}{2(l+h^{\vee})} \sum_{i=1}^d (-1)^{p(a_i)p(a^i)}a_{(i)}(-1) a^{(i)}(-1){\bf 1}
\end{align}
of  central charge $c = \frac{l\cdot{\rm dim}\frak{g}}{l+h^{\vee}} $ \cite{KRW}.

\vskip 0.2cm
Denote by $L_{\widehat{\mathfrak{g}}}(l,0)$ (also denoted by $L_l(\frak{g})$ in some references) the simple quotient of $V_{\widehat{\mathfrak{g}}}(l,0)$. 
$L_{\widehat{\mathfrak{g}}}(l,0)$ is called the simple affine vertex superalgebra associated to  $\mathfrak{g}$ at level $l$.

\vskip 0.2cm
We now assume that $\frak{g}=osp(1|2)$ \cite{Ksuper}. 
Let $\{e,f,h,x,y\}$ be the basis of the Lie superalgebra $osp(1|2)$ with the anti-commutation relations:
\begin{align*}
&[e,f]= h,[h,e] = 2e,[h,f] = -2f, \\
& [h,x]= x,[e,x] = 0, [f,x] = -y, \\
& [h,y] = -y,[e,y] = -x,[f,y] = 0, \\
& \{x,x\} =2e,\{x,y\} = h, \{y,y\} = -2f.
\end{align*}
Let  $(\cdot \mid \cdot)$ be the  invariant even supersymmetric nondegenerate bilinear form on $osp(1|2)$ such that $(\alpha \mid \alpha) =2$ if $\alpha$ is a long root in the root system of even part where we identify the Cartan subalgebra $\mathfrak{h}$ with $\mathfrak{h}^{\vee}$ via the bilinear form $(\cdot \mid \cdot)$. Then the conformal vector $\omega$ of $V_{\widehat{\mathfrak{g}}}(l,0)$ is given as 
\begin{align*}
\omega & = \frac{1}{2(l+\frac{3}{2})}(\frac{1}{2}h(-1)h(-1)\mathbf{1}+ e(-1)f(-1)\mathbf{1} + f(-1)e(-1)\mathbf{1} \\
& - \frac{1}{2}x(-1)y(-1)\mathbf{1} + \frac{1}{2}y(-1)x(-1)\mathbf{1} )
\end{align*}
of central charge $\frac{2l}{2l+3}$ \cite{KRW}. If $l\in{\mathbb Z}_{\geq 0}$,  the simple affine vertex operator superalgebra $L_{\widehat{osp(1|2)}}(l,0)$ is rational and $C_2$-cofinite \cite{AL}, \cite{CL}.

\section{Structure of the VOA
	$C( L_{\widehat{osp(1|2)}}(2,0), L_{\widehat{osp(1|2)}}(1,0) ^{\otimes 2})$}

\subsection{Commutant vertex operator superalgebras}
In this subsection, we review commutant vertex operator (super)algebras following \cite{CRS}, \cite{CLi}, \cite{FZ},  \cite{LL}, although  these references are about vertex operator algebras.
Let $V$ be a vertex operator superalgebra and $U$  a vertex operator super-subalgebra of $V$. Set
\begin{align*}
C(U,V) = \{v \in V|[Y(u,z_{1}),Y(v,z_{2})] = 0, u \in U   \},
\end{align*}
where 
$$
[Y(u,z_1), Y(v,z_2)]=Y(u,z_1)Y(v,z_2)-(-1)^{p(u)p(v)}Y(v,z_1)Y(u,z_1).
$$
By the Jacobi identity (see also \cite{LL}),  we have
\begin{align*}
C(U,V) = \{v \in V|u_{m}v = 0, u \in U, m \geq 0   \}.
\end{align*}
The following result is about vertex operator algebras.  But it is also true for vertex operator superalgebras.
\begin{theorem}\cite{FZ}, \cite{LL}
	Let $(V,Y,1,\omega)$ be a nonzero vertex operator algebra such that $V=\oplus_{n=0}^{\infty}V_{n}$ and $V_{0} = \mathbb{C}\mathbf{1}$. Let  $(U,Y,1,\omega')$ be a vertex operator subalgebra of $V$ and assume that
	\begin{align*}
	\omega' \in U \cap V_{(2)}
	\end{align*}
	satisfying
	\begin{align*}
	L(1)\omega' = 0.
	\end{align*}
	Set
	\begin{align*}
	Y(\omega', x)=\sum_{n \in \mathbb{Z}} L'(n) x^{-n-2}
	\end{align*}
	acting on all elements of $V$. 
	
 Then the gradings of $V$ and $U$ are compatible ( i.e. $L(0) = L'(0)$ on $U$) and more generally,
	\begin{align*}
	L(n) = L'(n) \quad \text{on} \quad U \quad  \text{for all} \quad n \geq -1.
	\end{align*}
   Set $\omega'' = \omega - \omega'$, then $\omega'' \in C(U,V)$ and $(C(U,V),Y,1,\omega'')$ is a vertex operator algebra of central charge equal to $c_{V} - c_{U}$ and 
	\begin{align*}
	L(1)\omega'' = 0.
	\end{align*}
\end{theorem}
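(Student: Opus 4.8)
My whole strategy rests on the commutator formula $[a_m,b_n]=\sum_{i\ge 0}\binom{m}{i}(a_ib)_{m+n-i}$ (a form of the Jacobi/Borcherds identity) and on skew-symmetry, applied with $a$ one of the even vectors $\omega,\omega'$; throughout I write $L(n)=\omega_{n+1}$ and $L'(n)=\omega'_{n+1}$, both viewed as operators on all of $V$. The plan is first to record the cheap identities and then to isolate the one genuinely hard point.

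First I would dispose of the case $n=-1$ and set up the weight-$2$ data. Since $U$ is a subalgebra, its vertex operators and vacuum are restrictions of those of $V$, so $L(-1)u=u_{-2}\mathbf 1=L'(-1)u$ for $u\in U$, giving $L(-1)=L'(-1)$ on $U$. Next, using $\omega'\in V_{(2)}$ (so $L(0)\omega'=2\omega'$), $L(1)\omega'=0$, and $V_0=\C\mathbf 1$, I compute $\omega_i\omega'=L(i-1)\omega'$ for every $i$: it equals $L(-1)\omega'$ for $i=0$, $2\omega'$ for $i=1$, $0$ for $i=2$, a scalar multiple $\mu\mathbf 1$ (it lies in the weight-$0$ space) for $i=3$, and $0$ for $i\ge 4$ (negative weight). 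Substituting these into the commutator formula already yields $[L(0),L'(n)]=-nL'(n)$ on $V$; in particular $L(0)$ and $L'(0)$ commute and each $L'(n)$ has $L(0)$-degree $-n$.

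The crux is the grading compatibility $L(n)=L'(n)$ on $U$ for all $n\ge -1$, equivalently $\omega''_{m}u=0$ for $u\in U$ and $m\ge 0$, where $\omega''=\omega-\omega'$ and $\omega''_m=\omega_m-\omega'_m=L(m-1)-L'(m-1)$. I would prove this by induction on the $L'(0)$-weight of $u$: the case $u=\mathbf 1$ is immediate, and for $u$ of positive weight I would write $u$ as a sum of vectors $b_{-j}u'$ with $j\ge 1$ and $u'$ of strictly smaller weight, expand $(L(n)-L'(n))b_{-j}u'$ by the commutator formula, and reduce to the statement for $b$ and $u'$. The real obstacle, and the only subtle step of the theorem, is closing this induction on those elements of $U$ whose $V$-weight is not a priori tied to their $U$-weight; this is precisely where the hypotheses $\omega'\in V_{(2)}$, $L(1)\omega'=0$ and $V_0=\C\mathbf 1$ must be fed in, anchoring the two gradings to each other from the seed $L(0)\omega'=2\omega'=L'(0)\omega'$. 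Once this is in hand, taking $n=2$ identifies the scalar above: $\mu\mathbf 1=L(2)\omega'=L'(2)\omega'=\tfrac{c_U}{2}\mathbf 1$, so $\mu=\tfrac{c_U}{2}$.

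Finally I would assemble the coset structure. Skew-symmetry turns $\omega''_{m}u=0$ $(u\in U,\ m\ge 0)$ into $u_m\omega''=0$ $(u\in U,\ m\ge 0)$, i.e. $\omega''\in C(U,V)$. To see that $\omega''$ is conformal, I would bracket $L''(n):=\omega''_{n+1}=L(n)-L'(n)$ directly: the Virasoro relations for $L$ (charge $c_V$) and for $L'$ (charge $c_U$, valid on all of $V$ since it comes from the $V$-commutator formula for $\omega'$), together with the mixed brackets $[L(m),L'(n)]=(m-n)L'(m+n)+\binom{m+1}{3}\tfrac{c_U}{2}\delta_{m+n,0}$ and its transpose, collapse to $[L''(m),L''(n)]=(m-n)L''(m+n)+\delta_{m+n,0}\tfrac{m^3-m}{12}(c_V-c_U)$; the value $\mu=\tfrac{c_U}{2}$ is exactly what makes the central terms combine. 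Moreover, for $v\in C(U,V)$ one has $L'(n)v=\omega'_{n+1}v=0$ for all $n\ge -1$, so on $C(U,V)$ the operators $L''(-1)$ and $L''(0)$ agree with the translation operator $L(-1)$ and the grading operator $L(0)$; hence $C(U,V)$ inherits from $V$ a $\tfrac12\Z_{\ge 0}$-grading with finite-dimensional homogeneous pieces and weight-$0$ part $\C\mathbf 1$, it contains $\mathbf 1$ and is closed under $Y$, and $\omega''$ is a conformal vector of central charge $c_V-c_U$. The remaining relation $L(1)\omega''=0$ is immediate, since $L(1)\omega=0$ and $L(1)\omega'=0$, which finishes the verification.
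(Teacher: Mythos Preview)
The paper does not prove this theorem; it is quoted from \cite{FZ} and \cite{LL} without argument, so there is no in-paper proof to compare against. Your sketch correctly isolates all the computational ingredients (the products $\omega_i\omega'$, the mixed bracket $[L(m),L'(n)]$, the Virasoro relations for $L''$, and the easy identities $L(-1)=L'(-1)$ on $U$ and $L(1)\omega''=0$), and your final paragraph assembling the coset structure is sound once the grading compatibility is available.

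The genuine gap is precisely where you flag it, and your proposed induction on the $L'(0)$-weight does not close. Expanding $L''(n)\,b_{-j}u'$ by the commutator formula produces terms $(\omega''_ib)_{\,\cdot\,}u'$ for all $i\ge 0$, so you need the statement for $b$ as well as for $u'$; but the only universal decomposition is $u=u_{-1}\mathbf 1$, which takes $b=u$ and reduces the claim for $u$ to itself (already at weight $1$ there is no choice of $b$ and $u'$ both of strictly smaller weight). Relatedly, your identification $\mu=c_U/2$ is placed \emph{after} grading compatibility and hence is circular as used; it should be obtained first, for instance by computing $L'(2)L'(0)\omega$ two ways: from $\omega'_1\omega=2\omega'$ one gets $L'(2)(2\omega')=c_U\mathbf 1$, while from $[L'(2),L'(0)]=2L'(2)$ and $\omega'_3\omega=\mu\mathbf 1$ one gets $2\mu\mathbf 1$, so $\mu=c_U/2$. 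With this in hand $\omega''_i\omega'=0$ for all $i\ge 0$, hence $[L''(m),L'(n)]=0$ on $V$, which already yields the commuting Virasoro pair and $L(n)=L'(n)$ on the subalgebra generated by $\omega'$. For the full claim on an arbitrary $U$ one must consult the cited references for the remaining step (or, depending on the precise definition of ``vertex operator subalgebra'' adopted there, the compatibility of gradings may be built into the hypothesis).
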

 Let $l_1, \cdots, l_s$ be positive integers. For the simple vertex operator superalgebras $ L_{\widehat{osp(1|2)}}(l_i,0)$ associated to ${osp(1|2)}$ at levels $l_{i}$,  $1\leq i\leq s$,  we have  the tensor product vertex operator superalgebra
\begin{align*}
    V=  L_{\widehat{osp(1|2)}}(l_{1},0) \otimes  L_{\widehat{osp(1|2)}}(l_{2},0) \otimes \cdots \otimes  L_{\widehat{osp(1|2)}}(l_{s},0).
\end{align*}
 Let $l = l_{1} + l_{2} + \cdots + l_{s}$.  Since  the Lie superalgebra $osp(1|2)$ can be naturally embedded into the weight one subspace of $V$ by
\begin{align*}
      a \mapsto a(-1)\mathbf{1} \otimes \mathbf{1} \otimes \cdots \otimes \mathbf{1} + \mathbf{1} \otimes a(-1)\mathbf{1} \otimes \cdots \otimes \mathbf{1} + \cdots + \mathbf{1} \otimes \mathbf{1} \otimes \cdots \otimes a(-1) \mathbf{1},
\end{align*}
$V$ has a   simple vertex operator superalgebra $U$ isomorphic to  $ L_{\widehat{osp(1|2)}}(l,0)$. Then the commutant  $C(U,V)$ of $U$ in $V$ is a vertex operator (super)subalgebra of $V$. We will focus on the case that $s=2$, and  $l_1=l_2=1$. 
\subsection{Characters of irreducible modules of  $L_{\widehat{osp(1|2)}}(l,0)$}

 For a positive integer $l$, let  $L_{\widehat{sl_{2}}}(l, 0)$  be the simple affine vertex operator algebra associated to $sl_2$ at level $l$.  It is well-known that  $L_{\widehat{sl_{2}}}(l, 0)$ has $l+1$ irreducible modules $L_{\widehat{sl_{2}}}(l, i)$, $0\leq i\leq l$
 \cite{FZ}, \cite{LL}. 
 Following \cite{CFK} we  denote by $V_{r,s}$ the  irreducible module $L_{\text{Vir}}(c_{2l+3,l+2} ,h_{2l+3, l+2}^{r,s})$ for the rational Virasoro vertex operator algebra $L(c_{2l+3,l+2},0)$. 
   We have the following result from  \cite{CFK}.
\begin{theorem}\label{thmCFK} \cite{CFK} Let $l$ be a positive integer. Then 
all simple modules of the vertex operator superalgebra $L_{\widehat{osp(1|2)}}(l, 0)$ up to isomorphism are given as
$$M_{r}=M_r^{even}\oplus M_r^{odd},  \ 1\leq r\leq 2l+1, \  r\in 2{\mathbb Z}+1,$$
where  $M_1=L_{\widehat{osp(1|2)}}(l,0) $ and 
\begin{align}\label{Meven}
 & M^{{even }}_{r} = \bigoplus_{i=0, { even }}^{l} L(l, i) \otimes V_{i+1,r}, 
 \end{align}
 \begin{align}\label{Modd}
 & M^{{odd }}_{r} = \bigoplus_{i=0, { odd }}^{l} L(l, i) \otimes V_{i+1,r}. 
\end{align}
\end{theorem}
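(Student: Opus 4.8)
The plan is to exploit the well-known decomposition of the affine vertex operator superalgebra $L_{\widehat{osp(1|2)}}(l,0)$ under the action of the commuting pair consisting of the bosonic affine algebra $\widehat{sl}_2$ at level $l$ and a Virasoro algebra arising from the ``super part'' of $osp(1|2)$. Recall that $osp(1|2)$ contains $sl_2 = \mathrm{span}\{e,f,h\}$ as its even part, and the odd elements $x,y$ generate, via the Sugawara-type quadratic combination, a commuting Virasoro subalgebra. Concretely, I would first write $\omega^{osp} = \omega^{sl_2} + \omega^{\mathrm{Vir}}$, where $\omega^{sl_2}$ is the Sugawara conformal vector of the even part and $\omega^{\mathrm{Vir}} = \omega^{osp} - \omega^{sl_2}$ is a commutant conformal vector. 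A direct computation with the anti-commutation relations of $osp(1|2)$ shows that $\omega^{\mathrm{Vir}}$ generates a copy of the Virasoro algebra of central charge $c^{osp} - c^{sl_2} = \frac{2l}{2l+3} - \frac{3l}{l+2}$, which one checks equals $c_{2l+3,\,l+2} = 1 - \frac{6(l+1)^2}{(2l+3)(l+2)}$.

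The central step is to identify the joint $(\widehat{sl}_2 \times \mathrm{Vir})$-module structure of $L_{\widehat{osp(1|2)}}(l,0)$ and each of its simple modules. Here I would invoke the rationality and $C_2$-cofiniteness of $L_{\widehat{osp(1|2)}}(l,0)$ (stated in the excerpt, from \cite{AL}, \cite{CL}) together with the rationality of $L_{\widehat{sl}_2}(l,0)$ and of the minimal Virasoro model $L(c_{2l+3,l+2},0)$, so that every simple $osp$-module decomposes as a finite direct sum of tensor products $L(l,i) \otimes V_{i+1,r}$ of simple modules over the two commuting factors. The coefficients — which pairs $(i,r)$ occur, and with what multiplicity — I would pin down by a character/branching argument: compute the graded characters (with the appropriate $\mathbb{Z}_2$-grading refinement tracking the even/odd decomposition) of both sides and match them, using the known string functions or normalized characters of $\widehat{sl}_2$ at level $l$ and the Virasoro minimal characters. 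The even/odd $\mathbb{Z}_2$-grading is exactly what forces the parity condition ``$i$ even'' in \eqref{Meven} versus ``$i$ odd'' in \eqref{Modd}: the odd generators $x,y$ shift the $sl_2$-spin by a half-integer amount, so the fermionic part of each module picks out the opposite parity of the $sl_2$-label $i$.

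The main obstacle I expect is twofold. First, establishing that the listed modules $M_r$ for odd $r$ with $1 \le r \le 2l+1$ are \emph{all} the simple modules, and that they are pairwise inequivalent and irreducible — not merely that these tensor-product decompositions are consistent. For this I would appeal to the classification of irreducible $L_{\widehat{osp(1|2)}}(l,0)$-modules via the Zhu algebra (or via the admissible-weight classification for $osp(1|2)$ at positive integer level), count that there are exactly $l+1$ simple modules indexed by $r \in \{1,3,\dots,2l+1\}$, and match this count against the branching. Second, the bookkeeping for the multiplicities in the branching requires care: one must verify that each simple $V_{i+1,r}$ occurs with multiplicity one and that the constraint on $(i+1,r)$ lies in the admissible range $S_{2l+3,\,l+2}$, i.e. $1 \le i+1 \le l+2-1$ and $1 \le r \le 2l+3-1$, which is consistent with $0 \le i \le l$ and odd $r$ with $1 \le r \le 2l+1$. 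The cleanest route to close this is a character-theoretic uniqueness argument: show that the sum of characters of the proposed $M_r$ over all $r$, refined by parity, reproduces the full character of $L_{\widehat{osp(1|2)}}(l,0) \otimes (\text{regular representation})$, and that no other joint weight can appear because the two commuting rational VOAs exhaust the relevant Heisenberg/Virasoro symmetry. I would then conclude that the decompositions \eqref{Meven}--\eqref{Modd} are forced, completing the proof of Theorem~\ref{thmCFK}.
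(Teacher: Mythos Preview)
The paper does not supply its own proof of Theorem~\ref{thmCFK}: the statement is quoted verbatim from \cite{CFK} and treated as an input to the later arguments, so there is no in-paper proof to compare against. Your sketch is broadly in line with how \cite{CFK} actually proceeds---identify the coset $C(L_{\widehat{sl}_2}(l,0),\,L_{\widehat{osp(1|2)}}(l,0))$ with the Virasoro minimal model $L(c_{2l+3,l+2},0)$ via the central-charge match $\frac{2l}{2l+3}-\frac{3l}{l+2}=c_{2l+3,l+2}$, invoke rationality on both factors, and pin down the branching by characters/Jacobi-form identities---so as an outline it is sound.

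That said, two of your steps are gestured at rather than argued. First, knowing the central charge does not by itself prove that the coset is the \emph{simple} Virasoro VOA $L(c_{2l+3,l+2},0)$ rather than some quotient of $V(c_{2l+3,l+2},0)$ or a larger extension; in \cite{CFK} this is handled by an explicit character identity (or by appeal to the general coset machinery), and in the present paper the analogous issue for $c_{10,7}$ is dealt with carefully in the proof of Theorem~\ref{thm11111} via singular-vector analysis. Second, your ``character-theoretic uniqueness argument'' for multiplicity one and for exhausting all simples is not a routine count: one needs either the explicit Jacobi-form identities of \cite{CFK}, \cite{KW} relating $osp(1|2)$ characters to products of $\widehat{sl}_2$ and Virasoro characters, or a mirror-extension/induction argument to pass from the known simple modules of the tensor product $L_{\widehat{sl}_2}(l,0)\otimes L(c_{2l+3,l+2},0)$ to those of the extension $L_{\widehat{osp(1|2)}}(l,0)$. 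Your proposal names the right ingredients but does not carry out either of these steps.
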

 From \cite{CFK}, \cite{KW}, the characters of these modules are given by
\begin{equation}\label{5}
\begin{aligned}
  \operatorname{ch}\left[M_{r}\right]=\frac{\Theta_{b_{+}, a}\left(\frac{z}{2}, \frac{\tau}{2}\right)-\Theta_{b_{-}, a}\left(\frac{z}{2}, \frac{\tau}{2}\right)}{\Pi(z, t)}
\end{aligned}
\end{equation}
with the Jacobi theta functions
\begin{align*}
\Theta_{b_{\pm}, a}(z, \tau)=\sum_{m \in \mathbb{Z}} w^{a\left(m+\frac{b_{\pm}}{2 a}\right)} q^{a\left(m+\frac{b_{\pm}}{2 a}\right)^2}
\end{align*}
and the Weyl super denominator
\begin{align*}
\Pi(z, \tau)=\Theta_{1,3}\left(\frac{z}{2}, \frac{\tau}{2}\right)-\Theta_{-1,3}\left(\frac{z}{2}, \frac{\tau}{2}\right)=w^{\frac{1}{4}} q^{\frac{1}{24}} \prod_{n=1}^{\infty} \frac{\left(1-q^n\right)\left(1-w q^n\right)\left(1-w^{-1} q^{n-1}\right)}{\left(1+w^{\frac{1}{2}} q^n\right)\left(1+w^{-\frac{1}{2}} q^{n-1}\right)}
\end{align*}
with
\begin{align*}
 w = e^{2\pi iz}, \quad q = e^{2\pi i\tau}, \quad b_{\pm} = \pm r, \quad a =2l+3.
\end{align*}
 The characters are all analytic functions in the domain $1 \leq |w| \leq |q^{-1}|$
which are meromorphically continued to the mermorphic Jacobi forms.

\subsection{The structure of  $C( L_{\widehat{osp(1|2)}}(2,0),L_{\widehat{osp(1|2)}}(1,0) ^{\otimes 2})$}

 Recall (\ref{Meven}) and (\ref{Modd}), we have 
\begin{align*}
L_{\widehat{osp(1|2)}}(l, 0) = L_{\widehat{osp(1|2)}}^{\text {even }}(l, 0) \oplus L_{\widehat{osp(1|2)}}^{\text {odd }}(l, 0)=\bigoplus_{i=0}^{l} L(l, i) \otimes V_{i+1,1}.
\end{align*}
We  have the following lemma.
\begin{lemma}\label{cc}
The conformal vector of  the coset construction  $C( L_{\widehat{osp(1|2)}}(2,0), L_{\widehat{osp(1|2)}}(1,0) ^{\otimes 2})$ is a Virasoro vector with central charge  $c_{10,7}=\frac{8}{35}$.
\end{lemma}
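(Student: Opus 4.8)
The plan is to apply the commutant theorem recalled above to the inclusion $U\hookrightarrow V$, where $V=L_{\widehat{osp(1|2)}}(1,0)^{\otimes 2}$ and $U$ is the diagonally embedded copy of $osp(1|2)$; by the discussion preceding this subsection $U\cong L_{\widehat{osp(1|2)}}(2,0)$, since the diagonal embedding into a tensor product of two level one algebras produces level $1+1=2$. The Sugawara conformal vector $\omega'$ of $U$ lies in $U\cap V_{(2)}$ and satisfies $L(1)\omega'=0$, so the hypotheses of the commutant theorem (which, as noted, remain valid for vertex operator superalgebras) hold. It then yields $\omega''=\omega-\omega'$ as the conformal vector of $C(U,V)$, with $L(1)\omega''=0$ and central charge $c_V-c_U$. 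Since any conformal vector is by definition a Virasoro vector, the only substantive content remaining is the value of the central charge.

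I would then compute the two central charges from the formula $c=\frac{2l}{2l+3}$ for $L_{\widehat{osp(1|2)}}(l,0)$ recorded earlier. As central charge is additive over tensor products, $c_V=2\cdot\frac{2\cdot 1}{2\cdot 1+3}=\frac{4}{5}$, while $c_U=\frac{2\cdot 2}{2\cdot 2+3}=\frac{4}{7}$. Hence the central charge of $C(U,V)$ equals $c_V-c_U=\frac{4}{5}-\frac{4}{7}=\frac{8}{35}$. Matching this against the rational Virasoro series with $(p,q)=(10,7)$ gives $c_{10,7}=1-\frac{6(10-7)^2}{10\cdot 7}=1-\frac{54}{70}=\frac{8}{35}$, which establishes the claim.

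The argument is essentially a bookkeeping computation, so there is no serious obstacle; the points I would state carefully are (i) that the commutant theorem, phrased for vertex operator algebras, transfers to the super setting — here it helps that $V$ is $\mathbb{Z}_{\geq 0}$-graded with $V_0=\mathbb{C}\mathbf{1}$, because $a(-1)\mathbf{1}$ has conformal weight one even for the odd generators $x,y$ — and (ii) the identity $L(1)\omega'=0$ for the Sugawara vector, which guarantees that $\omega''$ is a well-behaved Virasoro vector with $L(1)\omega''=0$. With these in hand, the arithmetic $\frac{4}{5}-\frac{4}{7}=\frac{8}{35}=c_{10,7}$ completes the proof.
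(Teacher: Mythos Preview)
Your proof is correct and follows essentially the same route as the paper: compute the central charges $c_V=2\cdot\frac{2}{5}=\frac{4}{5}$ and $c_U=\frac{4}{7}$ from the Sugawara formula $\frac{2l}{2l+3}$, subtract, and identify $\frac{8}{35}$ with $c_{10,7}$. The paper's argument is terser (it omits the explicit verification of the commutant-theorem hypotheses that you spell out), but the content is identical.
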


\begin{proof}

Since the central charge of $L_{\widehat{osp(1|2)}}(l, 0)$ is $\frac{2l}{2l+3}$ \cite{KRW},  it follows  that $c_{L_{\widehat{osp(1|2)}}(1, 0)} = \frac{2}{5}$, and $c_{L_{\widehat{osp(1|2)}}(2, 0)} = \frac{4}{7}$. Then we have
\begin{equation}\label{3}
\begin{aligned}
    c_{L_{\widehat{osp(1|2)}}(1, 0)^{\otimes 2}} = 2c_{L_{\widehat{osp(1|2)}}(1, 0)} =   c_{L_{\widehat{osp(1|2)}}(2, 0)} + c_{10,7}.
\end{aligned}
\end{equation}
 This deduces that the central charge of the commutant vertex operator (super)algebra  $C( L_{\widehat{osp(1|2)}}(2,0),L_{\widehat{osp(1|2)}}(1,0) ^{\otimes 2})$ is equal to $c_{10,7}$. 
\end{proof}

We now introduce  the characters of irreducible modules of  the rational  Virasoro vertex operator algebra $L(c_{10,7},0)$ from \cite{An}, \cite{IK}.
\begin{theorem}\label{the1}\cite{An} \cite{IK}
The character of the irreducible $L(c_{10,7},0)$-module $L_{\text{Vir}}(c_{10,7},h_{10,7}^{r,s})$ is given by
\begin{align}
  ch[L_{\text{Vir}}(c_{10,7},h_{10,7}^{r,s})] = [\Theta_{10r-7s, 70}\left(0, \tau\right) - \Theta_{10r+7s, 70}\left(0, \tau\right)]\eta(\tau)^{-1},
\end{align}
where $\eta(\tau) = q^{\frac{1}{24}}\prod_{n=1}^{\infty}(1-q^{n})$ with $q = e^{2\pi i \tau}$.
\end{theorem}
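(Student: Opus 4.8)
The plan is to obtain the stated formula as a specialization of the Rocha--Caridi character formula for irreducible minimal model modules, and then to repackage that formula into the theta functions $\Theta_{b,a}$ of the paper. The representation-theoretic input is the Feigin--Fuchs description of Verma modules over the Virasoro algebra at central charge $c_{p,q}$: when $h = h_{p,q}^{r,s}$ is a minimal weight, the highest weights linked to $h$ under the affine Weyl group action form two interlocking bi-infinite progressions, and $L(c_{p,q}, h_{p,q}^{r,s})$ admits a two-sided BGG-type (Feigin--Fuchs) resolution by the corresponding Verma modules. I will take the Euler--Poincar\'e sum of the characters of this resolution; since a Virasoro Verma module of highest weight $h$ has character $ch[M(c,h)] = q^{h - c/24}\prod_{n\geq 1}(1-q^n)^{-1}$, the alternating sum produces a numerator that is a difference of theta series.

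First I would record the precise form of the linked weights. In the normalization of the paper, $h_{p,q}^{r,s} = \frac{(sq - rp)^2 - (p-q)^2}{4pq}$ and $c_{p,q} = 1 - \frac{6(p-q)^2}{pq}$, so that
\begin{align*}
h_{p,q}^{r,s} - \frac{c_{p,q}}{24} = \frac{(pr - qs)^2}{4pq} - \frac{1}{24}.
\end{align*}
The weights appearing in the resolution are exactly those of the form $\frac{(2pqn + pr \mp qs)^2 - (p-q)^2}{4pq}$ with $n \in \Z$, the progression $2pqn + pr - qs$ contributing with sign $+$ and $2pqn + pr + qs$ with sign $-$. Combining $ch[M]$ with the identity $\prod_{n\geq 1}(1-q^n)^{-1} = q^{1/24}\eta(\tau)^{-1}$ converts each Verma character into $q^{(2pqn + pr \mp qs)^2/(4pq)}\eta(\tau)^{-1}$, so the Euler--Poincar\'e sum gives
\begin{align*}
ch[L(c_{p,q}, h_{p,q}^{r,s})] = \eta(\tau)^{-1}\sum_{n\in\Z}\left(q^{\frac{(2pqn + pr - qs)^2}{4pq}} - q^{\frac{(2pqn + pr + qs)^2}{4pq}}\right).
\end{align*}

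It then remains to specialize and match notation. Setting $p = 10$, $q = 7$ gives $pq = 70$, $4pq = 280$, $2pq = 140$, $pr - qs = 10r - 7s$, and $pr + qs = 10r + 7s$. Comparing with the paper's definition evaluated at $z = 0$, namely $\Theta_{b, a}(0, \tau) = \sum_{m\in\Z} q^{a(m + b/(2a))^2} = \sum_{m\in\Z} q^{(2am + b)^2/(4a)}$, and taking $a = 70$, I identify $\sum_{n} q^{(140n + 10r - 7s)^2/280} = \Theta_{10r - 7s, 70}(0, \tau)$ and $\sum_{n} q^{(140n + 10r + 7s)^2/280} = \Theta_{10r + 7s, 70}(0, \tau)$. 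Substituting these into the displayed formula yields $ch[L_{\text{Vir}}(c_{10,7}, h_{10,7}^{r,s})] = [\Theta_{10r - 7s, 70}(0, \tau) - \Theta_{10r + 7s, 70}(0, \tau)]\eta(\tau)^{-1}$, which is the assertion.

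The genuinely substantial ingredient is not the convention bookkeeping, which is routine once the identity $h_{p,q}^{r,s} - c_{p,q}/24 = (pr - qs)^2/(4pq) - 1/24$ is checked, but the structure theory underlying the resolution: that the Kac determinant vanishing pattern at $c = c_{p,q}$ forces precisely the interlocking chain of Verma embeddings, and that the Feigin--Fuchs resolution is exact so that the Euler--Poincar\'e principle applies. Since this is established in \cite{An} and \cite{IK}, I would invoke it rather than reprove it; the remaining work is to verify that the two families of linked weights are exactly the progressions $2pqn + pr \mp qs$, which is the content that turns the alternating Verma sum into the difference of the two theta series.
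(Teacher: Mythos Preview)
Your derivation is correct: the Rocha--Caridi/Feigin--Fuchs resolution gives exactly the alternating sum of Verma characters you wrote, the identity $h_{p,q}^{r,s}-c_{p,q}/24=(pr-qs)^2/(4pq)-1/24$ is right, and the match with the paper's $\Theta_{b,a}(0,\tau)=\sum_{m\in\Z}q^{(2am+b)^2/(4a)}$ at $a=70$, $b=10r\mp 7s$ goes through cleanly (and agrees with the explicit expansion the paper uses later, $ch_q[L_{\text{Vir}}(c_{10,7},h_{10,7}^{r,1})]=\eta(\tau)^{-1}\sum_m(q^{70(m+\frac{10r-7}{140})^2}-q^{70(m+\frac{10r+7}{140})^2})$).

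The only comparison worth making is that the paper does not prove this theorem at all: it is stated with the attributions \cite{An}, \cite{IK} and then simply used. Your write-up is essentially an unpacking of what those references contain, carried out in the paper's theta-function conventions; so you have supplied a proof where the paper has supplied a citation, and the content you invoke (Kac determinant embedding pattern, exactness of the Feigin--Fuchs resolution, Euler--Poincar\'e) is precisely the content of those sources.
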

We will also need the following result from \cite{CFK}, \cite{CR2} and  \cite{KW}.
\begin{theorem}\cite{CFK}\cite{CR2} \cite{KW}\label{cha}
	Let $l + \frac{3}{2} = \frac{p}{2p'}$ where $p > 1$ and $p'$ are positive coprime integers with $p+p' \in 2\mathbb{Z}$ and $p, \frac{p+p'}{2}$ are coprime. Let $\Delta = p+p'$, the  characters of $L(l,i)$ can be shown as
	\begin{align}
		\operatorname{ch}\left[L(l,i)\right](z, \tau)=\frac{\left(\Theta_{2 p^{\prime} (i+1), \Delta p^{\prime}}-\Theta_{-2 p^{\prime} (i+1), \Delta p^{\prime}}\right)\left(\frac{z}{2 p^{\prime}}, \frac{\tau}{2}\right)}{\mathfrak{i} \vartheta_1(w, q)}
	\end{align}
	with 
	\begin{align*}
		w = e^{2\pi iz}, \quad q = e^{2\pi i\tau}
	\end{align*}
	and
	\begin{align*}
		\vartheta_2(z, \tau)=\sum_{n \in \mathbb{Z}} w^{\left(n+\frac{1}{2}\right)} q^{\frac{1}{2}\left(n+\frac{1}{2}\right)^2}, \quad \vartheta_1(z, \tau)=-\vartheta_2\left(z+\frac{1}{2}, \tau\right).
	\end{align*}
\end{theorem}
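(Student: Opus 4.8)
The plan is to read the stated identity off the Weyl--Kac character formula, of which it is merely a repackaging into classical Jacobi theta functions. In the situation relevant to this paper $l$ is a positive integer, so the constraints $l+\frac32=\frac{p}{2p'}$, $(p,p')=1$ force $p'=1$, $p=2l+3$, $\Delta=2l+4=2(l+2)$; thus each $L(l,i)$ with $0\le i\le l$ is an \emph{integrable} highest weight $\hsl$-module of level $l$ and the Weyl--Kac formula applies directly. (The statement is phrased for general coprime $(p,p')$ because the same derivation, with the Kac--Wakimoto admissible character formula in place of Weyl--Kac, covers the admissible levels $p'>1$.) Writing the highest weight of $L(l,i)$ as $\Lambda=(l-i)\Lambda_0+i\Lambda_1$, $\widehat\rho$ for the affine Weyl vector, $\widehat W$ for the affine Weyl group and $\epsilon$ for the sign character, the character is the quotient
\[
\operatorname{ch}[L(l,i)]=\Bigl(\textstyle\sum_{\sigma\in\widehat W}\epsilon(\sigma)\,e^{\sigma(\Lambda+\widehat\rho)}\Bigr)\Big/\Bigl(\textstyle\sum_{\sigma\in\widehat W}\epsilon(\sigma)\,e^{\sigma\widehat\rho}\Bigr),
\]
and the proof consists of evaluating numerator and denominator separately.

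For the numerator I would use that the affine Weyl group of $\hsl$ is the infinite dihedral group $\widehat W=W\ltimes T$, the semidirect product of the order-two finite Weyl group $W=\{1,s\}$ with the coroot translation lattice $T$. Summing $e^{\sigma(\Lambda+\widehat\rho)}$ first over $T$ collects, in the variables $w=e^{2\pi iz}$, $q=e^{2\pi i\tau}$, into a single classical theta function at the shifted level $l+2$, with characteristic determined by $\langle\Lambda+\widehat\rho,\cdot\rangle$, while the residual finite reflection $s$ antisymmetrizes the characteristic. Concretely this gives $\Theta_{i+1,\,l+2}-\Theta_{-(i+1),\,l+2}$, which in the doubled-level normalization used in the statement is exactly $\bigl(\Theta_{2p'(i+1),\,\Delta p'}-\Theta_{-2p'(i+1),\,\Delta p'}\bigr)\bigl(\tfrac{z}{2p'},\tfrac{\tau}{2}\bigr)$; indeed, a direct check of the series shows $\Theta_{2(i+1),\,2l+4}(\tfrac z2,\tfrac\tau2)=\Theta_{i+1,\,l+2}(z,\tau)$, so the rescaling $z\mapsto\tfrac{z}{2p'}$, $\tau\mapsto\tfrac\tau2$ together with the factor $2p'$ in the index is only a change of variables.

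For the denominator I would invoke the Weyl--Kac denominator identity $\sum_{\sigma\in\widehat W}\epsilon(\sigma)e^{\sigma\widehat\rho}=\prod_{\alpha>0}(1-e^{-\alpha})^{\mathrm{mult}\,\alpha}$, which for $\hsl$ is the Jacobi triple product. In the same variables the alternating $\widehat\rho$-sum is the level-two difference $\Theta_{1,2}(z,\tau)-\Theta_{-1,2}(z,\tau)=\sum_{n\in\mathbb Z}(-1)^n w^{\,n+\frac12}q^{\frac12(n+\frac12)^2}$, and comparing with $\vartheta_1(z,\tau)=-\vartheta_2(z+\tfrac12,\tau)$ identifies it with $\mathfrak{i}\,\vartheta_1(w,q)$. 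Dividing the numerator theta difference by $\mathfrak{i}\,\vartheta_1(w,q)$ then produces precisely the asserted formula.

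The main obstacle is purely the bookkeeping of normalizations rather than anything conceptual: one must match the convention $\Theta_{b,a}(z,\tau)=\sum_m w^{a(m+\frac{b}{2a})}q^{a(m+\frac{b}{2a})^2}$ against the lattice sum produced by $T$, keep the factor $2p'$ consistent between the index $b=2p'(i+1)$ and the rescaled argument $\tfrac{z}{2p'}$, and pin down the exact power of $\mathfrak{i}$ and the half-period shift turning the level-two denominator theta into $\vartheta_1$. Provided the same normalization is used for numerator and denominator so that the overall $q^{\bullet}w^{\bullet}$ prefactors cancel, once the affine Weyl sum is split into the translation lattice sum and the finite antisymmetrization the identification with $\Theta_{\pm 2p'(i+1),\,\Delta p'}$ and with $\mathfrak{i}\,\vartheta_1$ is forced.
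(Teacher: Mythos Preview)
The paper does not prove this theorem; it simply quotes the character formula from the cited references \cite{CFK}, \cite{CR2}, \cite{KW} and uses it as input for the subsequent $q$-character computations. Your derivation via the Weyl--Kac character formula, splitting the affine Weyl sum into the translation lattice (producing the level-$(l+2)$ theta function) and the finite reflection (producing the antisymmetrization $\Theta_{i+1}-\Theta_{-(i+1)}$), together with the denominator identity recognized as the triple product $\mathfrak{i}\vartheta_1$, is exactly the standard argument underlying those references, so there is nothing to compare against and your outline is correct.
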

\begin{lemma}\label{lem 2}
\begin{equation}\label{4} 
\begin{aligned}
 & ch_{q}[L_{\widehat{osp(1|2)}}(1, 0) ^{\otimes 2}]   =  q^{-\frac{1}{30}}(1 + 10q + 43q^2 +132q^3+375q^4+946q^5 +2199q^6 \\& + 4852q^7+10188q^8+20542q^9+40084q^{10}+75940q^{11}+ 140219q^{12}+ 253150q^{13}\\& + 447857q^{14}+ 777940q^{15} + 1329196q^{16} + 2236966q^{17}+3712731q^{18}+6083848q^{19}+\cdots),
 \end{aligned}
 \end{equation}
 \begin{equation}\label{4e}
 \begin{aligned}
  & ch_{q}[L_{\widehat{osp(1|2)}}(2, 0)](ch_{q}[L_{\text{Vir}}(c_{10,7},0)]+ch_{q}[L_{\text{Vir}}(c_{10,7},h_{10,7}^{6,1})])\\ &+ ch_{q}[M_{3}] (ch_{q}[L_{\text{Vir}}(c_{10,7},h_{10,7}^{3,1})] + ch_{q}[L_{\text{Vir}}(c_{10,7},h_{10,7}^{4,1})]) \\ &+ ch_{q}[M_{5}](ch_{q}[L_{\text{Vir}}(c_{10,7},h_{10,7}^{2,1})]+ ch_{q}[L_{\text{Vir}}(c_{10,7},h_{10,7}^{5,1})]) \\
  & = q^{-\frac{1}{30}}(1 + 10q + 43q^2 +132q^3+375q^4+946q^5 +2199q^6  + 4852q^7+10188q^8 \\ & +20542q^9+40084q^{10}+75940q^{11}+ 140219q^{12}+ 253150q^{13} + 447857q^{14}+ 777940q^{15}\\& + 1329196q^{16} + 2236966q^{17}+3712731q^{18}+6083848q^{19}+\cdots).
\end{aligned}
\end{equation}
where $M_{3}$ and $M_{5}$ are irreducible modules of $L_{\widehat{osp(1|2)}}(l, 0)$ given in Theorem \ref{thmCFK} with $l = 2$. 
\end{lemma}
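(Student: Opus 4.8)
The plan is to prove both displayed identities as equalities of graded $q$-dimensions, directly from the explicit character formulas recorded above. Identity (\ref{4}) is immediate: the graded dimension is multiplicative under tensor product, so $ch_q[L_{\widehat{osp(1|2)}}(1,0)^{\otimes 2}]=(ch_q[L_{\widehat{osp(1|2)}}(1,0)])^2$, and it remains only to expand the single-factor character and square it. The cleanest way to obtain that character avoids the Weyl super-denominator: by Theorem~\ref{thmCFK} with $l=1$ one has $L_{\widehat{osp(1|2)}}(1,0)=L(1,0)\otimes V_{1,1}\oplus L(1,1)\otimes V_{2,1}$, with $\widehat{sl_2}$ level-one and $c_{5,3}$-Virasoro characters, both honest power series; hence $ch_q[L_{\widehat{osp(1|2)}}(1,0)]=q^{-1/60}(1+5q+\cdots)$, and squaring reproduces the right side of (\ref{4}) (already the prefactor $q^{-1/30}$ and the coefficient $10=2\cdot 5$ of $q$ check out).

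For (\ref{4e}) I would compute every character on the left the same way. Theorem~\ref{thmCFK} with $l=2$ gives $ch_q[M_r]=\sum_{i=0}^{2}ch[L(2,i)]\,ch[V_{i+1,r}]$ for $r=1,3,5$, in terms of $\widehat{sl_2}$ level-two and $c_{7,4}$-Virasoro characters, while Theorem~\ref{the1} gives each $ch_q[L_{\text{Vir}}(c_{10,7},h_{10,7}^{r,s})]$ as a ratio of level-seventy theta functions by $\eta$. Substituting these renders the entire left side of (\ref{4e}) an explicit $q$-series, and the lemma reduces to checking that it agrees with the square produced in (\ref{4}).

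There are two ways to certify this agreement. The pragmatic route observes that all the characters are components of a vector-valued modular form for a fixed finite-dimensional representation of $SL_2(\mathbb{Z})$, so agreement up to a computable (Sturm-type) bound forces equality of the full series; the expansion through $q^{19}$ displayed in (\ref{4e}) then certifies the identity. The conceptual, exact route exploits the nested coset structure visible here: the $l=1$ case of Theorem~\ref{thmCFK} together with the $\widehat{sl_2}$ GKO coset $L(1,0)^{\otimes 2}\supset L(2,0)\otimes L(c_{4,3},0)$ rewrites $L_{\widehat{osp(1|2)}}(1,0)^{\otimes 2}$ through $\widehat{sl_2}$ level-two, Ising, and $c_{5,3}^{\otimes 2}$ data, whereas $L_{\widehat{osp(1|2)}}(2,0)$ carries $\widehat{sl_2}$ level-two and $c_{7,4}$ data; the residual Virasoro factors are compatible since $c_{4,3}+2c_{5,3}=c_{7,4}+c_{10,7}=-\tfrac{7}{10}$, and the three groupings of $c_{10,7}$-modules on the left of (\ref{4e}) are exactly those selected by the $\mathbb{Z}_2$-grading labels $r\in\{1,3,5\}$ of the $M_r$. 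Making this route fully rigorous requires the analogous coset statement for these residual Virasoro factors.

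The main obstacle is bookkeeping rather than conceptual difficulty. Through the theta-function formulas one must handle the half-period arguments $(\tfrac{z}{2},\tfrac{\tau}{2})$ and the simultaneous vanishing of the super-denominator $\Pi$ and the theta numerators at $w=1$, so that the graded dimension is itself a limit, and then track the index arithmetic matching a level-five square against level-seven and level-seventy products. The genuine content of the lemma is to show that precisely the six Virasoro modules $L_{\text{Vir}}(c_{10,7},h_{10,7}^{r,1})$, $1\le r\le 6$, occur, grouped with $M_1,M_3,M_5$ as written.
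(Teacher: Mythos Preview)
Your proposal is correct in spirit but over-reads the lemma. In the paper the statement is purely a finite $q$-expansion claim: the two displayed series are each computed through $q^{19}$ and seen to coincide. The paper's proof does exactly this and nothing more. It writes down the specialized characters $ch_q[L_{\widehat{osp(1|2)}}(1,0)]$, $ch_q[L_{\widehat{osp(1|2)}}(2,0)]$, $ch_q[M_3]$, $ch_q[M_5]$ directly from the Kac--Wakimoto formula~(\ref{5}) (taking the limit $w\to 1$, which produces the linear factors $(10m+1)$, $(14m+1)$, etc.), writes down $ch_q[L_{\text{Vir}}(c_{10,7},h_{10,7}^{r,1})]$ from Theorem~\ref{the1}, multiplies and tabulates the coefficients through $q^{19}$, and observes that the six products sum to the square. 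No Sturm bound, no modularity, no nested-coset identity.

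Two differences are worth noting. First, your suggestion to compute $ch_q[M_r]$ via the branching of Theorem~\ref{thmCFK} rather than via~(\ref{5}) is a legitimate and arguably cleaner alternative, since it sidesteps the $0/0$ limit in the super-denominator; the paper instead carries out that limit explicitly. Second, your Sturm-type and nested-coset arguments aim at full $q$-series equality, which the lemma neither asserts nor needs: the lemma feeds into Theorem~\ref{thm11111}, where only the first dozen or so coefficients are used to pin down singular vectors, and the full decomposition~(\ref{ethm1}) is then established structurally. In other words, what you call the ``genuine content'' (that exactly the six modules $L_{\text{Vir}}(c_{10,7},h_{10,7}^{r,1})$ appear, grouped as written) is the content of the \emph{next} theorem, not of this lemma; here it is only a numerical coincidence to be verified.
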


\begin{proof} 
According to (\ref{hrs}), all conformal weights of the different irreducible modules of $L_{\text{Vir}}(c_{10,7},0)$ are listed in the Table 1 below, where $1 \leq r \leq 6$ and $1 \leq s \leq 9$.
\begin{table}[h]\label{tabb111}
\centering
\caption{Conformal weights $h_{7,10}^{r,s}$ of  irreducible modules  $L_{\text{Vir}}(c_{10,7},h_{10,7}^{r,s})$ }
\begin{tabular}{|l| c|  c| c| c| c| c|c| c| c|r|}
\hline
\diagbox{r}{s}  &1  &2  & 3  & 4  & 5 & 6 & 7 & 8 & 9  \\ \hline
1  & 0  & 1/40 & 2/5 & 9/8  & 11/5 & 29/8 & 27/5 &301/40 &10 \\ \hline
2  & 4/7  & 27/280 & -1/35  & 11/56  &27/35 &95/56 &104/35 &1287/280 &46/7 \\ \hline
3 & 13/7 & 247/280  & 9/35  & -1/56  & 2/35  & 27/56  & 44/35 & 667/280 & 27/7 \\ \hline
4 &27/7 &667/280 &44/35 &27/56 &2/35 &-1/56 & 9/35 &247/280 &13/7\\ \hline
5 & 46/7 &1287/280 & 104/35 &95/56 & 27/35 & 11/56 & -1/35 &27/280 &4/7\\ \hline
6 & 10 &301/40 &27/5 &29/8 & 11/5 & 9/8 & 2/5 &1/40 &0 \\ \hline
\end{tabular}
\end{table}
 From (\ref{hrs}), we have
\begin{align*}
  h_{10,7}^{r,s} = h_{10,7}^{7-r,10-s}, 1 \leq r \leq 6,1 \leq s \leq 9.
\end{align*}
By  Table 1, the only integral weights of $L_{\text{Vir}}(c_{10,7},0)$  are $ h_{10,7}^{1,1}$ and $h_{10,7}^{6,1}$. 

 Using  (\ref{5}) and Theorem \ref{the1}, we have the following $q$-characters:
 \begin{align*}
  & ch_{q}[L_{\widehat{osp(1|2)}}(1, 0)] = \frac{(\sum_{m \in \mathbb{Z}}(10m+1)q^{\frac{5}{2}(m+\frac{1}{10})^2)})(\Pi^{\infty}_{m=1}(1+q^m)(1+q^m))}{q^{\frac{1}{24}}\Pi^{\infty}_{m=1}(1-q^m)(1-q^m)(1-q^m)}, \\
  &  ch_{q}[L_{\widehat{osp(1|2)}}(2, 0)] = \frac{(\sum_{m \in \mathbb{Z}}(14m+1)q^{\frac{7}{2}(m+\frac{1}{14})^2})(\Pi^{\infty}_{m=1}(1+q^m)(1+q^m))}{q^{\frac{1}{24}}\Pi^{\infty}_{m=1}(1-q^m)(1-q^m)(1-q^m)}, \\
  & ch_{q}[M_{3}] = \frac{(\sum_{m \in \mathbb{Z}}(14m+3)q^{\frac{7}{2}(m+\frac{3}{14})^2})(\Pi^{\infty}_{m=1}(1+q^m)(1+q^m))}{q^{\frac{1}{24}}\Pi^{\infty}_{m=1}(1-q^m)(1-q^m)(1-q^m)}, \\
  &  ch_{q}[M_{5}] = \frac{(\sum_{m \in \mathbb{Z}}(14m+5)q^{\frac{7}{2}(m+\frac{5}{14})^2})(\Pi^{\infty}_{m=1}(1+q^m)(1+q^m))}{q^{\frac{1}{24}}\Pi^{\infty}_{m=1}(1-q^m)(1-q^m)(1-q^m)}, \\
  &  ch_{q}[L_{\text{Vir}}(c_{10,7},h_{10,7}^{r,1})] = \frac{\sum_{m \in \mathbb{Z}}(q^{70(m+\frac{10r-7}{140})^2} -  q^{70(m+\frac{10r+7}{140})^2})}{q^{\frac{1}{24}}\Pi^{\infty}_{m=1}(1-q^m)}
 \end{align*}
with $1 \leq r \leq 6$. Then direct calculation yields that 
\begin{align*}
 & ch_{q}[L_{\widehat{osp(1|2)}}(2, 0)]ch_{q}[L_{\text{Vir}}(c_{10,7},0)]  =  q^{-\frac{1}{30}}(1 + 5q + 19q^2 +48q^3+124q^4+284q^5 \\  &+613q^6 + 1266q^7+2513q^8+4806q^9+8959q^{10}+ 16267q^{11}+ 28895q^{12}+ 50326q^{13}\\& + 86128q^{14}+ 145015q^{15} + 240682q^{16} + 394109q^{17}+637435q^{18}+1019306q^{19}\cdots), \\
 & ch_{q}[L_{\widehat{osp(1|2)}}(2, 0)]ch_{q}[L_{\text{Vir}}(c_{10,7},h_{10,7}^{6,1})] =  q^{-\frac{1}{30}}(q^{10} + 6q^{11}+ 25q^{12}+ 73q^{13} + 197q^{14}\\&+ 481q^{15} + 1093q^{16} + 2354q^{17}+4848q^{18}+9605q^{19}+\cdots), \\ 
 & ch_{q}[M_{3}]ch_{q}[L_{\text{Vir}}(c_{10,7},h_{10,7}^{3,1})]  =  q^{-\frac{1}{30}}( 3q^2 +18q^3+64q^4+189q^5+487q^6 + 1155q^7 \\ & +2561q^8+5394q^9+10879q^{10}+ 21177q^{11}+ 39981q^{12}+ 73510q^{13} + 132042q^{14}\\&+ 232330q^{15} + 401262q^{16} + 681527q^{17}+1140021q^{18}+1880571q^{19} +\cdots), \\
 & ch_{q}[M_{3}]ch_{q}[L_{\text{Vir}}(c_{10,7},h_{10,7}^{4,1})]  =  q^{-\frac{1}{30}}(3q^4+18q^5 +64q^6 +192q^7 + 502q^8+1201q^9 \\ & +2689q^{10}  + 5707q^{11}+ 11593q^{12}+ 22711q^{13} + 43127q^{14}+ 79709q^{15} + 143874q^{16}\\& + 254280q^{17}+440990q^{18}+751891q^{19}+\cdots), \\
 & ch_{q}[M_{5}]ch_{q}[L_{\text{Vir}}(c_{10,7},h_{10,7}^{2,1})]   =  q^{-\frac{1}{30}}(5q+ 21q^2 +66q^3+184q^4+455q^5 +1035q^6 \\ & + 2234q^7+4591q^8+9070q^9+17351q^{10}+ 32257q^{11}+ 58490q^{12}+ 103791q^{13}\\&  + 180603q^{14}+ 308780q^{15} +519629q^{16}+861849q^{17}+1410525q^{18}+2280428q^{19}+\cdots), \\
 & ch_{q}[M_{5}]ch_{q}[L_{\text{Vir}}(c_{10,7},h_{10,7}^{5,1})]   =  q^{-\frac{1}{30}}( 5q^7+21q^8+71q^9+205q^{10}+ 526q^{11}+ 1235q^{12}\\&+ 2739q^{13} + 5760q^{14}+ 11625q^{15} + 22656q^{16}+42847q^{17}+78912q^{18}+142047q^{19}+\cdots), \\
 & ch_{q}[L_{\widehat{osp(1|2)}}(1, 0) ^{\otimes 2}]   =  q^{-\frac{1}{30}}(1 + 10q + 43q^2 +132q^3+375q^4+946q^5 +2199q^6 \\& + 4852q^7+10188q^8+20542q^9+40084q^{10}+ 75940q^{11}+ 140219q^{12}+ 253150q^{13}\\& + 447857q^{14}+ 777940q^{15} + 1329196q^{16} + 2236966q^{17}+3712731q^{18}+6083848q^{19} \cdots).
\end{align*}
We thus obtain  the table below.
\begin{table}[htbp]\label{tab}
\centering
\caption{Coefficients of each power of $q$ for these characters}
\resizebox{\textwidth}{!}{
\begin{tabular}{|l| c| c| c| c| c| c| c| c|c| c| c|r|}
\hline
 & 0 &1  &2  & 3  & 4  & 5 & 6 & 7 & 8 & 9 & 10 \\ \hline
$ch_{q}[L_{\widehat{osp(1|2)}}(2, 0)]ch_{q}[L_{\text{Vir}}(c_{10,7},0)]$& 1  & 5  & 19  & 48  & 124  & 284 & 613 & 1266 &2513 &4806 &8959 \\ \hline
$ch_{q}[L_{\widehat{osp(1|2)}}(2, 0)]ch_{q}[L_{\text{Vir}}(c_{10,7},h_{10,7}^{6,1})]$& 0  & 0  & 0  & 0  &0  &0 &0 &0 &0 &0 &1 \\ \hline
$ ch_{q}[M_{3}]ch_{q}[L_{\text{Vir}}(c_{10,7},h_{10,7}^{3,1})]$& 0 & 0& 3  & 18  & 64  & 189  & 487  & 1155 & 2561 & 5394 &10879  \\ \hline
$ ch_{q}[M_{3}]ch_{q}[L_{\text{Vir}}(c_{10,7},h_{10,7}^{4,1})]$& 0 & 0& 0  & 0  & 3  & 18  & 64  & 192 & 502 & 1201 &2689 \\ \hline
$ch_{q}[M_{5}]ch_{q}[L_{\text{Vir}}(c_{10,7},h_{10,7}^{2,1})]$& 0 & 5 &21 &66 &184 &455 &1035 & 2234 &4591 &9070 &17351\\ \hline
$ch_{q}[M_{5}]ch_{q}[L_{\text{Vir}}(c_{10,7},h_{10,7}^{5,1})]$& 0 & 0 &0 &0 &0 &0 &0 & 5 &21&71& 205\\ \hline
$ ch_{q}[L_{\widehat{osp(1|2)}}(1, 0) ^{\otimes 2}] $& 1 &10 &43 & 132 &375 & 946 & 2199 & 4852 & 10188 &20542 & 40084\\ \hline

Coefficients of items in (\ref{4e})& 1 &10 &43 & 132 &375 & 946 & 2199 & 4852 & 10188 &20542 & 40084\\ \hline
\end{tabular}
}
\end{table}

\newpage 

\begin{table}[h]\label{tab}
\centering
\captionsetup{labelformat=empty} 
\caption{Coefficients of each power of $q$ for these characters(continued)}
\resizebox{\textwidth}{!}{
\begin{tabular}{|l| c| c| c| c| c| c| c| c|c| c| c|r|}
\hline
  &11  &12  & 13  & 14  & 15 & 16 & 17 & 18 & 19 \\ \hline
$ch_{q}[L_{\widehat{osp(1|2)}}(2, 0)]ch_{q}[L_{\text{Vir}}(c_{10,7},0)]$& 16267 & 28895  & 50326  & 86128  & 145015  & 240682 & 394109 & 637435 &1019306 \\ \hline
$ch_{q}[L_{\widehat{osp(1|2)}}(2, 0)]ch_{q}[L_{\text{Vir}}(c_{10,7},h_{10,7}^{6,1})]$& 6  & 25  & 73  & 197  &481  &1093 &2354 &4848 &9605 \\ \hline
$ ch_{q}[M_{3}]ch_{q}[L_{\text{Vir}}(c_{10,7},h_{10,7}^{3,1})]$& 21177 & 39981& 73510  & 132042  & 232330  & 401262  & 681527  & 1140021 & 1880571  \\ \hline
$ ch_{q}[M_{3}]ch_{q}[L_{\text{Vir}}(c_{10,7},h_{10,7}^{4,1})]$& 5707 & 11593& 22711  & 43127  & 79709  & 143874 & 254280 & 440990 &751891 \\ \hline
$ch_{q}[M_{5}]ch_{q}[L_{\text{Vir}}(c_{10,7},h_{10,7}^{2,1})]$& 32257 & 58490 &103791 &180603 &308780 &519629 &861849 & 1410525 &2280428 \\ \hline
$ch_{q}[M_{5}]ch_{q}[L_{\text{Vir}}(c_{10,7},h_{10,7}^{5,1})]$& 526 & 1235 &2739 &5760 &11625 &22656 &42847 & 78912 &142047\\ \hline
$ ch_{q}[L_{\widehat{osp(1|2)}}(1, 0) ^{\otimes 2}] $& 75940 &140219 &253150 & 447857 &777940 & 1329196 & 2236966 & 3712731 & 6083848 \\ \hline

Coefficients of items in (\ref{4e})&  75940 &140219 &253150 & 447857 &777940 & 1329196 & 2236966 & 3712731 & 6083848 \\ \hline
\end{tabular}
}
\end{table}

\noindent
The lemma immediately follows from Table 2.
\end{proof}
\begin{theorem}\label{thm11111}	
(1)	  $C( L_{\widehat{osp(1|2)}}(2,0),  L_{\widehat{osp(1|2)}}(1,0) ^{\otimes 2})$ is a conformal extension of the simple Virasoro vertex operator algebra $L(c_{10,7},0)$.
	
(2) 
   \begin{equation}\label{ethm1}
   \begin{aligned}
L_{\widehat{osp(1|2)}}(1,0) ^{\otimes 2} =  & L_{\widehat{osp(1|2)}}(2,0) \otimes (L_{\text{Vir}}(c_{10,7},0) \oplus L_{\text{Vir}}(c_{10,7},h_{10,7}^{6,1})) \\
   & + M_{3} \otimes (L_{\text{Vir}}(c_{10,7},h_{10,7}^{3,1}) \oplus L_{\text{Vir}}(c_{10,7},h_{10,7}^{4,1}))\\
   & + M_{5} \otimes (L_{\text{Vir}}(c_{10,7},h_{10,7}^{2,1}) \oplus L_{\text{Vir}}(c_{10,7},h_{10,7}^{5,1})).
   \end{aligned}
   \end{equation}
  
  (3) $C( L_{\widehat{osp(1|2)}}(2,0),  L_{\widehat{osp(1|2)}}(1,0) ^{\otimes 2})=L_{\text{Vir}}(c_{10,7},0) \oplus L_{\text{Vir}}(c_{10,7},h_{10,7}^{6,1}))$ is a vertex operator algebra. 
\end{theorem}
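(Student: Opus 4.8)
The plan is to determine the $U$-module structure of $V:=L_{\widehat{osp(1|2)}}(1,0)^{\otimes2}$ over its diagonal subalgebra $U:=L_{\widehat{osp(1|2)}}(2,0)$, and then to feed this into the character identity already established in Lemma \ref{lem 2}. Since $U$ is rational, $V$ is a completely reducible $U$-module, so by Theorem \ref{thmCFK} (with $l=2$) it splits into isotypic components indexed by the three distinct irreducible $U$-modules $M_1=U,\ M_3,\ M_5$:
\begin{equation*}
V=\bigoplus_{r\in\{1,3,5\}}M_r\otimes\Omega_r,\qquad \Omega_r:=\operatorname{Hom}_U(M_r,V).
\end{equation*}
Each multiplicity space $\Omega_r$ is naturally a module for the commutant $C(U,V)$, hence in particular for the Virasoro algebra generated by the coset conformal vector $\omega''=\omega-\omega'$, which has central charge $c_{10,7}$ by Lemma \ref{cc}. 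By construction the multiplicity space of the vacuum component is the commutant itself, $\Omega_1=C(U,V)$, which is what parts (1) and (3) concern.

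First I would compute the graded dimensions of the $\Omega_r$. Taking graded characters of the isotypic decomposition gives $\operatorname{ch}[V]=\sum_r\operatorname{ch}[M_r]\,\operatorname{ch}[\Omega_r]$; comparing this with the identity (\ref{4e}) of Lemma \ref{lem 2}, and using that the distinct irreducible characters $\operatorname{ch}[M_1],\operatorname{ch}[M_3],\operatorname{ch}[M_5]$ are linearly independent, forces
\begin{align*}
\operatorname{ch}[\Omega_1]&=\operatorname{ch}[L_{\text{Vir}}(c_{10,7},0)]+\operatorname{ch}[L_{\text{Vir}}(c_{10,7},h_{10,7}^{6,1})],\\
\operatorname{ch}[\Omega_3]&=\operatorname{ch}[L_{\text{Vir}}(c_{10,7},h_{10,7}^{3,1})]+\operatorname{ch}[L_{\text{Vir}}(c_{10,7},h_{10,7}^{4,1})],\\
\operatorname{ch}[\Omega_5]&=\operatorname{ch}[L_{\text{Vir}}(c_{10,7},h_{10,7}^{2,1})]+\operatorname{ch}[L_{\text{Vir}}(c_{10,7},h_{10,7}^{5,1})].
\end{align*}
Once it is known that each $\Omega_r$ is an \emph{honest} direct sum of these \emph{simple} Virasoro modules, part (2) is precisely the isotypic decomposition displayed above, part (3) is the $r=1$ line $C(U,V)=\Omega_1=L_{\text{Vir}}(c_{10,7},0)\oplus L_{\text{Vir}}(c_{10,7},h_{10,7}^{6,1})$, and part (1) follows because the sub-VOA $\langle\omega''\rangle$ is then the simple Virasoro vertex operator algebra $L(c_{10,7},0)$ sitting inside $C(U,V)$ with the same conformal vector; rationality of the extension then follows from Wang's rationality of $L(c_{10,7},0)$ \cite{W} together with \cite{HKL}.

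The hard part is upgrading these character equalities to genuine module identifications, that is, ruling out that the Virasoro acts through a non-simple quotient of the universal module. Concretely, one must show that the vacuum sub-VOA $\langle\omega''\rangle$ is the simple $L(c_{10,7},0)$ rather than the universal $V(c_{10,7},0)$. Since $\langle\omega''\rangle$ is a quotient of $V(c_{10,7},0)$ and the two first differ at the vacuum singular vector, which for $(p,q)=(10,7)$ occurs at degree $(p-1)(q-1)=54$, a direct computation is hopeless, so a structural argument is essential. My plan is to exploit self-duality. The factor $L_{\widehat{osp(1|2)}}(1,0)$ is simple, rational and $C_2$-cofinite, so $V$ is a simple vertex operator superalgebra of CFT type; since $U$ is rational and the modules $M_1,M_3,M_5$ are pairwise distinct, the double-commutant principle yields that each $\Omega_r$ is a simple $C(U,V)$-module and in particular that $C(U,V)$ is simple, hence carries a nondegenerate invariant bilinear form. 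Now if the singular vector $s$ of $V(c_{10,7},0)$ were nonzero in $\langle\omega''\rangle\subseteq C(U,V)$, then $s$ would be $\omega''$-primary and therefore orthogonal to every $\omega''$-descendant; because $\Omega_1$ has the graded dimension of a finite sum of lowest-weight Virasoro modules, it is exhausted by such descendants, so $s$ would be orthogonal to all of $C(U,V)$ and hence vanish, a contradiction. This forces $\langle\omega''\rangle=L(c_{10,7},0)$, and the complete reducibility of each $\Omega_r$ — and thus the identification of all three multiplicity spaces with the stated sums of simple modules — then follows from the rationality of $L(c_{10,7},0)$. The main technical point to nail down carefully is exactly this exhaustion of $C(U,V)$ by lowest-weight Virasoro submodules, which is where the orthogonality argument could otherwise leak.
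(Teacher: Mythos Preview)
Your approach has a genuine gap: you treat Lemma~\ref{lem 2} as an exact character identity, but in fact it only verifies that the two $q$-expansions agree through $q^{19}$ (both (\ref{4}) and (\ref{4e}) end with ``$+\cdots$'', and the proof is a finite coefficient comparison via Table~2). You therefore cannot extract $\operatorname{ch}[\Omega_r]$ from it by linear independence of the $\operatorname{ch}[M_r]$, and your subsequent orthogonality argument for the simplicity of $\langle\omega''\rangle$ has nothing to stand on at weight~$54$ --- quite apart from the exhaustion issue you yourself flag, you do not know what $\Omega_1$ looks like beyond degree~$19$. Your proposal also does not address the ``vertex operator algebra (not superalgebra)'' part of~(3).

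The paper's route is different precisely because it only has finite character data available. Rather than first pinning down characters and then module structures, it bootstraps: it locates primary vectors $u^1,\ldots,u^5$ in the multiplicity spaces, and for each highest-weight submodule $Y^i$ they generate it records the two possible singular-vector weights $h_{r,-1}(\tfrac{10}{7})$ and $h_{-2q+r,-1}(\tfrac{10}{7})$ of the relevant Virasoro Verma module, taken from \cite{FF},~\cite{IK}. At each step Table~2 is just enough to exclude the lower singular vector and to exhibit the next primary vector; for $Y^5$ both singular-vector weights are $16$ and $19$, so Table~2 forces $Y^5\cong L(c_{10,7},h_{10,7}^{6,1})$. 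Simplicity of $\langle\omega''\rangle$ is then obtained not via self-duality but from the simplicity of $L_{\widehat{osp(1|2)}}(1,0)^{\otimes 2}$ together with \cite[Lemma~3.1]{DM}: if the weight-$54$ vacuum singular vector $x$ survived, then $Y(x,z)$ would annihilate the simple $L(c_{10,7},0)$-module $Y^5$, which is impossible in a simple vertex operator superalgebra. Once $\langle\omega''\rangle=L(c_{10,7},0)$, rationality plus the half-integrality constraint on conformal weights (read off from Table~1) pins down the decomposition~(\ref{ethm1}); part~(3) is then handled by a separate even/odd character computation and the fusion rules of $L(c_{10,7},0)$, which place $L(c_{10,7},h_{10,7}^{6,1})$ in the even part.
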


\begin{proof} By \cite{CL},  $L_{\widehat{osp(1|2)}}(2, 0)$ is rational. So $L_{\widehat{osp(1|2)}}(1, 0) ^{\otimes 2}$ is completely reducible as a module of $L_{\widehat{osp(1|2)}}(2, 0)$.  By Theorem \ref{thmCFK}, $L_{\widehat{osp(1|2)}}(2, 0)$ has three irreducible modules: $M_1=L_{\widehat{osp(1|2)}}(2, 0)$, $M_3$,  and $M_5$. Then we have 
\begin{equation}\label{decomposition}	
	L_{\widehat{osp(1|2)}}(1, 0) ^{\otimes 2}=L_{\widehat{osp(1|2)}}(2, 0)\otimes U\oplus M_3\otimes W^3\oplus M_5\otimes W^5,
\end{equation}	
	where 
	$U=C( L_{\widehat{osp(1|2)}}(2,0), L_{\widehat{osp(1|2)}}(1,0) ^{\otimes 2})$, and $W^3$ and $W^5$ are modules of  $U$.
	
Notice that the conformal weight of $L(l,i) \otimes V_{i+1,r}$ is
\begin{align}
  \frac{1}{4}(2(i+1)^2-2(i+1)r+\frac{(l+2)(r^2-1)}{2l+3}).
\end{align}
Using  (\ref{Meven}) and (\ref{Modd}), one can  check directly that the conformal weight of $M_{3}$ is $\frac{1}{7}$ and the lowest weight space $(M_3)_0$ of $M_3$ is 3-dimensional, while the conformal weight of $M_{5}$ is $\frac{3}{7}$ and the lowest weight space $(M_5)_0$ of $M_5$ is 5-dimensional.  Denote by $\omega'$ the conformal vector of $U$ and $X$ the Virasoro vertex subalgebra generated by $\omega'.$  By Lemma \ref{cc}, the central charge of $\omega'$ is $c_{10,7}$.  So $X$ is a quotient of the universal Virasoro vertex operator algebra $V(c_{10,7},0)$. 
It can be checked directly that there exist  primary vectors  $u^1\in W_5$ and $u^2\in W_3$ of  the Virasoro vertex operator algebra $X$,  that is, 
$$
\omega'_1u^1=\frac{4}{7}u^1=h_{10,7}^{2,1}u^1, \  ~  \omega'_{m}u^1=0, \ ~ m\geq 2, 
$$
and
$$
\omega'_1u^1=\frac{13}{7}u^1=h_{10,7}^{3,1}u^2, \  ~  \omega'_{m}u^2=0, \ ~ m\geq 2.
$$
This together with Table 2  deduces that the lowest weight subspaces of $W_5$ and $W_3$ are $\mathbb {C}u^1$ and ${\mathbb C}u^2$ respectively, and  $U$ is a conformal extension of $X$.  Denote by $Y^1$ the $X$-submodule of $W_5$ generated by $u^1$, and $Y^2$ the $X$-submodule of $W_3$ generated by $u^2$.  For non-zero  $t\in{\mathbb C}$,  and $\alpha, \beta\in{\mathbb Z}$, denote
\begin{equation}\label{ee1}
h_{\alpha,\beta}(t)=\frac{1}{4}(\alpha^2-1)t-\frac{1}{2}(\alpha\beta-1)+\frac{1}{4}(\beta^2-1)t^{-1}.
\end{equation}
Notice that $h_{r,s}(\frac{q}{p})=h_{p,q}^{r,s}$. Recall from \cite{FF},  \cite{IK} that the maximal submodule of the Verma  module $M(c_{p,q}, h_{r,s}(\frac{p}{q}))$ of the universal Virasoro Lie algebra is generated by two singular vectors of weights $h_{r,-s}(\frac{p}{q})$ and $h_{-2q+r,-s}(\frac{p}{q})$, respectively.  Then if $Y^1$ is not simple, $Y^1$ has at least one singular vector  of weight  $h_{2, -1}(\frac{10}{7})=\frac{4}{7}+2$ or 
$h_{-12, -1}(\frac{10}{7})=\frac{4}{7}+45$. Similarly,  if $Y^2$ is not simple, $Y^2$ has at least one singular vector  of weight  $h_{3, -1}(\frac{10}{7})=\frac{13}{7}+3$ or 
$h_{-11, -1}(\frac{10}{7})=\frac{13}{7}+36$.  Then by Table 2, there is no singular vector of weight $\frac{4}{7}+2$ in $Y^1$, while there is a primary vector $u^3\in W_3$ of $X$ of weight $h_{10,7}^{4,1}=\frac{13}{7}+2$.  This together with Table 2  implies that  there is no singular vector in $Y^2$ of weight $\frac{13}{7}+3$.  Denote by $Y^3$ the $X$-submodule of $W_3$ generated by $u^3$. If $Y^3$ is not simple, then $Y^3$ has at least one singular vector of weight $h_{4,-1}(\frac{10}{7})=\frac{13}{7}+6$ or $h_{-10,-1}(\frac{10}{7})=\frac{13}{7}+29$. Again by Table 2, there exists a primary vector $u^4$ of $X$  in $W_5$  of weight $h_{10,7}^{5,1}=\frac{4}{7}+6$ and there is no singular vector in $Y^3$ of weight $\frac{13}{7}+6$. Denote by $Y^4$ the $X$-submodule of $W_5$ generated by $u^4$.  If $Y^4$ is not simple, then there exists at least one singular vector in $Y^4$ of weight $h_{5,-1}(\frac{10}{7})=\frac{4}{7}+11$ or $h_{-9, -1}(\frac{10}{7})=\frac{4}{7}+25$. 
Then by Table 2, there is a primary vector $u^5$ in $U$ of weight $h_{10,7}^{6,1}=10$, while there is no singular vector in $Y^4$ of weight $\frac{4}{7}+11$. Denote by $Y^5$ the $X$-submodule of $U$ generated by $u^5$. If $Y^5$ is not simple, then there is at least one singular vector in $Y^5$ of weight $h_{6,-1}(\frac{10}{7})=16$ or $h_{-8,-1}(\frac{10}{7})=19$. But  by Table 2, there is no such singular vector in $Y^5$. This means that 
$$
Y^5\cong L(c_{10,7}, h_{10,7}^{6,1}).
$$
Notice that if
   $X$ is not simple, then $X$ has a singular vector $x$ of weight $54$.  By \cite{FZ} (also see \cite{L}),  the structure of $ L(c_{10,7}, h_{10,7}^{2,1})$ as a module of the universal Virasoro vertex operator algebra $V(c_{10,7},0)$ is unique.  Then 
$$
Y(x,z)u^5=0.
$$This contracts Lemma 3.1 of \cite{DM}, since $L_{\widehat{osp(1|2)}}(1,0) ^{\otimes 2}$ is simple. We deduce that 
$$X\cong L(c_{10,7},0).$$
Hence $U$ is a conformal extension of $L(c_{10,7},0)$.  

\vskip0.2cm
Since $L(c_{10,7},0)$ is rational,  as $L(c_{10,7},0)$-modules, $U$, $W_3$, and $W_5$ are direct sums of irreducible modules of $L(c_{10,7},0)$, respectively, and 
$$
Y^1\cong L(c_{10,7},h_{10,7}^{2,1}), \  Y^2\cong L(c_{10,7},h_{10,7}^{3,1}), \  Y^3\cong L(c_{10,7},h_{10,7}^{4,1}), \ Y^4\cong L(c_{10,7},h_{10,7}^{5,1}).
$$
To achieve an integral weight summation,  $M_{3}$ can only be paired with $L_{\text{Vir}}(c_{10,7}, h_{10,7}^{3,1})$ and $L_{\text{Vir}}(c_{10,7}, h_{10,7}^{4,1})$, whereas $M_{5}$ can only be paired with $L_{\text{Vir}}(c_{10,7}, h_{10,7}^{2,1})$ and $L_{\text{Vir}}(c_{10,7}, h_{10,7}^{5,1})$. Then  (\ref{ethm1}) follows and 
\begin{align}
U=	C( L_{\widehat{osp(1|2)}}(2,0),L_{\widehat{osp(1|2)}}(1,0) ^{\otimes 2} ) =  L_{\text{Vir}}(c_{10,7},0) \oplus L_{\text{Vir}}(c_{10,7},h_{10,7}^{6,1}).
\end{align}

We now prove that $L_{\text{Vir}}(c_{10,7},0) \oplus L_{\text{Vir}}(c_{10,7},h_{10,7}^{6,1})$ is a vertex operator algebra.
By (\ref{Meven})-(\ref{Modd}), 
 we have the following $q$-characters.
\begin{align}
ch_{q}[L_{\widehat{osp(1|2)}}^{even}(l, 0)] = \sum_{i = 0, i \text{ even}} ^{l}ch_{q}[L(l,i)]ch_{q}[L_{\text{Vir}}(c_{2l+3,l+2},h_{2l+3,l+2}^{i+1,1})],
\end{align}
\begin{align}
ch_{q}[L_{\widehat{osp(1|2)}}^{odd}(l, 0)] = \sum_{i = 0, i \text{ odd}} ^{l}ch_{q}[L(l,i)]ch_{q}[L_{\text{Vir}}(c_{2l+3,l+2},h_{2l+3,l+2}^{i+1,1})],
\end{align} 
\begin{align}
ch_{q}[M^{even}_{r}] = \sum_{i = 0, i \text{ even}}^{l}ch_{q}[L(l,i)]ch_{q}[L_{\text{Vir}}(c_{7,4},h_{7,4}^{i+1,r})],
\end{align}
\begin{align}
ch_{q}[M^{odd}_{r}] = \sum_{i = 0, i \text{ odd}}^{l}ch_{q}[L(l,i)]ch_{q}[L_{\text{Vir}}(c_{7,4},h_{7,4}^{i+1,r})],
\end{align}
where $r =3$ or $r = 5$.
By Theorems \ref{the1}-\ref{cha}, we can deduce that
\begin{align*}
& ch_{q}[L^{even}_{\widehat{osp(1|2)}}(1, 0) ^{\otimes 2}]  =  q^{-\frac{1}{30}}(1 + 6q + 23q^2 +68q^3+191q^4+478q^5 +1107q^6 \\  & + 2436q^7+5108q^8+10290q^9+20068q^{10}+\cdots), \\
& ch_{q}[L_{\widehat{osp(1|2)}}^{even}(2, 0)]ch_{q}[L_{\text{Vir}}(c_{10,7},0)]   =  q^{-\frac{1}{30}}(1 + 3q + 11q^2 +26q^3+66q^4+148q^5 \\  &+317q^6 + 648q^7+1281q^8+2438q^9+4533q^{10}+\cdots), \\
& ch_{q}[L_{\widehat{osp(1|2)}}^{even}(2, 0)]ch_{q}[L_{\text{Vir}}(c_{10,7},h_{10,7}^{6,1})]   =  q^{-\frac{1}{30}}(q^{10}+\cdots), \\
&  ch_{q}[M^{even}_{3}]ch_{q}[L_{\text{Vir}}(c_{10,7},h_{10,7}^{3,1})]   =  q^{-\frac{1}{30}}(q^2 +8q^3+30q^4+91q^5 +237q^6 + 567q^7 \\ & +1263q^8+2670q^9+5397q^{10}+\cdots), \\
& ch_{q}[M^{even}_{3}]ch_{q}[L_{\text{Vir}}(c_{10,7},h_{10,7}^{4,1})]   =  q^{-\frac{1}{30}}(q^4+8q^5+30q^6 + 92q^7+244q^8+589q^9 \\ & +1325q^{10}+\cdots), \\
&  ch_{q}[M^{even}_{5}]ch_{q}[L_{\text{Vir}}(c_{10,7},h_{10,7}^{2,1})]  =  q^{-\frac{1}{30}}(3q+11q^2 +34q^3+94q^4+231q^5 +523q^6 \\ & + 1126q^7+2309q^8+4556q^9+8707q^{10}+\cdots), \\
&  ch_{q}[M^{even}_{5}]ch_{q}[L_{\text{Vir}}(c_{10,7},h_{10,7}^{5,1})]  =  q^{-\frac{1}{30}}(3q^7+11q^8+37q^9+105q^{10}+\cdots), \\
\end{align*}
\begin{align*}
 & ch_{q}[L^{odd}_{\widehat{osp(1|2)}}(1, 0) ^{\otimes 2}]   =  q^{-\frac{1}{30}}(4q + 20q^2 +64q^3+184q^4+468q^5 +1092q^6 + 2416q^7  \\ &+5080q^8+10252q^9+20016q^{10}+\cdots), \\
 & ch_{q}[L_{\widehat{osp(1|2)}}^{odd}(2, 0)]ch_{q}[L_{\text{Vir}}(c_{10,7},0)]  =  q^{-\frac{1}{30}}(2q + 8q^2 +22q^3+58q^4+136q^5+296q^6  \\  & + 618q^7+1232q^8+2368q^9+4426q^{10}+\cdots), \\
 & ch_{q}[L_{\widehat{osp(1|2)}}^{odd}(2, 0)]ch_{q}[L_{\text{Vir}}(c_{10,7},h_{10,7}^{6,1})]   =  q^{-\frac{1}{30}}(0q^{10}+\cdots), \\
 & ch_{q}[M^{odd}_{3}]ch_{q}[L_{\text{Vir}}(c_{10,7},h_{10,7}^{3,1})]   =  q^{-\frac{1}{30}}(2q^2 +10q^3+34q^4+98q^5 +250q^6 + 588q^7 \\  &+1298q^8+2724q^9+5482q^{10}+\cdots), \\
 &  ch_{q}[M^{odd}_{3}]ch_{q}[L_{\text{Vir}}(c_{10,7},h_{10,7}^{4,1})]   =  q^{-\frac{1}{30}}(2q^4+10q^5+34q^6 + 100q^7+258q^8 +612q^9 \\ & +1364q^{10}+\cdots), \\
 &  ch_{q}[M^{odd}_{5}]ch_{q}[L_{\text{Vir}}(c_{10,7},h_{10,7}^{2,1})]   =  q^{-\frac{1}{30}}(2q+10q^2 +32q^3+90q^4+224q^5+512q^6 \\  &+ 1108q^7+2282q^8+4514q^9+8644q^{10}+\cdots), \\ 
 &  ch_{q}[M^{odd}_{5}]ch_{q}[L_{\text{Vir}}(c_{10,7},h_{10,7}^{5,1})]  =  q^{-\frac{1}{30}}(2q^7+10q^8+34q^9+100q^{10}+\cdots).
\end{align*}

By comparing the coefficients of $q$, $q^2$, and $q^3$, we deduce that 
$M^{even}_{3}\otimes L_{\text{Vir}}(c_{10,7},h_{10,7}^{3,1})$ and $M^{even}_{5}\otimes L_{\text{Vir}}(c_{10,7},h_{10,7}^{2,1})$ are included in  $L^{even}_{\widehat{osp(1|2)}}(1, 0) ^{\otimes 2}$. 
Comparing the coefficients of each power of $q^4$, we see that   $M^{even}_{3}\otimes L_{\text{Vir}}(c_{10,7},h_{10,7}^{4,1})$  is also included in $L^{even}_{\widehat{osp(1|2)}}(1, 0) ^{\otimes 2}$.  By the fusion rules of irreducible modules of $ L_{\text{Vir}}(c_{10,7}, 0)$, $ L_{\text{Vir}}(c_{10,7}, h_{10,7}^{6,1})$ is in the even part of $L{\widehat{osp(1|2)}}(1, 0) ^{\otimes 2}$. It follows that 
$L_{\text{Vir}}(c_{10,7},0) \oplus L_{\text{Vir}}(c_{10,7}, h_{10,7}^{6,1})$ is a vertex operator subalgebra of $L_{\widehat{osp(1|2)}}(1,0)^{\otimes 2}$. 
\end{proof}
\section{Irreducible modules and fusion rules of \\ $C( L_{\widehat{osp(1|2)}}(2,0), L_{\widehat{osp(1|2)}}(1,0) ^{\otimes 2})$ }

\subsection{Irreducible modules of $C( L_{\widehat{osp(1|2)}}(2,0),L_{\widehat{osp(1|2)}}(1,0) ^{\otimes 2})$}

In this section, we determine the irreducible modules  and fusion rules  of  $C( L_{\widehat{osp(1|2)}}(2,0),$ $ L_{\widehat{osp(1|2)}}(1,0) ^{\otimes 2})$.
 Recall the following results  from \cite{DL2}, \cite{JW},  and \cite{Lam}.

\begin{theorem}\label{thm4}\cite{DL2}\cite{JW}\cite{Lam}
Let $V$ be a simple rational and $C_{2}$-cofinite vertex operator algebra. Let $M$ be a simple current module of $V$ such that $\widetilde{V} = V \oplus M$ possesses a vertex operator algebra structure. Then any irreducible $\widetilde{V}$-module, regarded as a $V$-module, is of the following form:
\begin{itemize}
 \item $N \oplus \widetilde{N}$, where $N$ is an irreducible $V$-module and $\widetilde{N} = M \boxtimes_{V}N \ncong N$. In this case, $N \oplus \widetilde{N}$ has a unique $\widetilde{V}$-module structure.
 \item $N$, where $N$ is an irreducible $V$-module and $N \cong M \boxtimes_{V} N$. In this case, there exist two non-isomorphic  $\widetilde{V}$-module structures on $N$.
\end{itemize}

\end{theorem}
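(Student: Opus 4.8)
The plan is to use the $\mathbb Z_2$-grading $\widetilde V=V\oplus M$, with $V$ even and $M$ odd, together with the order-two automorphism $\sigma$ of $\widetilde V$ acting as $+1$ on $V$ and $-1$ on $M$, so that $V=\widetilde V^{\sigma}$. First I would note that, since $\widetilde V$ is a vertex operator algebra, the odd-odd products force $M\boxtimes_V M$ to contain $V$; as $M$ is a simple current this fusion product is irreducible, whence $M\boxtimes_V M\cong V$ and the functor $M\boxtimes_V(-)$ is an involution on the set of isomorphism classes of irreducible $V$-modules. Next, for an arbitrary irreducible $\widetilde V$-module $W$ I would restrict to $V$; rationality of $V$ gives a finite decomposition into irreducibles, and for $m\in M$ and an irreducible constituent $N$ the Jacobi identity shows that $Y_W(m,z)|_N$ is an intertwining operator of type $\binom{W}{M\ N}$, so the odd part carries the $N$-isotypic component into the $M\boxtimes_V N$-isotypic component. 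Because $M$ is a simple current, $M\boxtimes_V N$ is again irreducible, so the set of $V$-constituents of $W$ is stable under the above involution, whose orbits have size one or two.

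The cleanest way to organize the two cases is via the induced module $\mathrm{Ind}(N)=\widetilde V\boxtimes_V N$, which as a $V$-module is $N\oplus(M\boxtimes_V N)$, together with Frobenius reciprocity $\operatorname{Hom}_{\widetilde V}(\mathrm{Ind}(N),W)\cong\operatorname{Hom}_V(N,W|_V)$. When $\widetilde N:=M\boxtimes_V N\ncong N$, I would show $\mathrm{Ind}(N)$ is irreducible: its $V$-constituents $N$ and $\widetilde N$ are non-isomorphic, and the odd operators interchange them, leaving no proper $\widetilde V$-submodule. Then any irreducible $\widetilde V$-module whose restriction contains $N$ is a nonzero quotient of $\mathrm{Ind}(N)$, hence equals it; this yields simultaneously $W|_V\cong N\oplus\widetilde N$ with multiplicity one, existence, and uniqueness of the $\widetilde V$-module structure, which is the first bullet.

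The fixed-point case $N\cong M\boxtimes_V N$ is where the main obstacle lies, and it is the delicate content of \cite{DL2}, \cite{JW}, \cite{Lam}. Here $\mathrm{Ind}(N)\cong N\oplus N$ as a $V$-module, and the question becomes the structure of its algebra of $\widetilde V$-module endomorphisms. The even operators act as scalars on each copy by Schur's lemma, while the odd operators are governed by the one-dimensional space $I_V\binom{N}{M\ N}$; the closure relation $M\boxtimes_V M\cong V$ forces the square of the odd action to be a fixed nonzero scalar multiple of the canonical $V$-action, and its two square-root normalizations---equivalently the two characters of $\mathbb Z_2$---give two inequivalent irreducible $\widetilde V$-module structures on $N$, interchanged by $\sigma$. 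I would confirm non-isomorphism by comparing the scalar by which a fixed odd element acts on the lowest-weight space, and deduce that these are the only structures from the decomposition of $\mathrm{Ind}(N)$. The genuinely technical points---verifying that each normalization satisfies the full Jacobi identity and that the endomorphism algebra is exactly two-dimensional---are precisely what the cited Schur-lemma and induced-module analyses establish, so I would invoke them rather than reprove the associativity computation from scratch.
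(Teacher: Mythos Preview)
The paper does not prove this theorem at all: Theorem~\ref{thm4} is stated as a quoted result from \cite{DL2}, \cite{JW}, \cite{Lam} and is used as a black box in the proof of Theorem~\ref{thmf}. There is therefore no ``paper's own proof'' to compare your proposal against.

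That said, your sketch is a reasonable outline of the standard argument in the cited references. The $\mathbb{Z}_2$-grading, the induction functor $\widetilde{V}\boxtimes_V(-)$, Frobenius reciprocity, and the dichotomy between free and fixed orbits under $M\boxtimes_V(-)$ are exactly the ingredients used in those papers. Your honest flagging of the fixed-point case as the delicate part, and your decision to invoke the cited works for the associativity/Jacobi verification rather than redo it, is appropriate for a result being quoted rather than established. If you were writing this up as an actual proof you would need to be more careful about one point: the assertion that $M\boxtimes_V M\cong V$ follows from $\widetilde{V}$ being a VOA requires knowing that the odd-odd product is nonzero, which in turn uses simplicity of $\widetilde{V}$ or an equivalent nondegeneracy hypothesis; the paper's statement does not explicitly assume $\widetilde{V}$ simple, though in the applications it is.
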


\begin{theorem}\label{thmf}
All irreducible modules of the commutant vertex operator algebra  $C( L_{\widehat{osp(1|2)}}(2,0), $ $L_{\widehat{osp(1|2)}}(1,0) ^{\otimes 2})$ are given by  $L_{\text{Vir}}(c_{10,7},h_{10,7}^{r,s}) \oplus L_{\text{Vir}}(c_{10,7},h_{10,7}^{7-r,s})$ 
where 
\begin{align*}
  h_{10, 7}^{r, s}=\frac{(10r-7s )^2-9}{280}, 1 \leq r \leq 3,1 \leq s \leq 9.
\end{align*}
\end{theorem}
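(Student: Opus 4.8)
The plan is to realize $\widetilde W:=C(L_{\widehat{osp(1|2)}}(2,0),L_{\widehat{osp(1|2)}}(1,0)^{\otimes 2})$ as a simple current extension of the Virasoro vertex operator algebra $W:=L(c_{10,7},0)$ and then apply Theorem \ref{thm4}. By Theorem \ref{thm11111} we already know $\widetilde W=W\oplus M$ as a $W$-module, where $M:=L_{\text{Vir}}(c_{10,7},h_{10,7}^{6,1})$, and $W$ is simple, rational and $C_2$-cofinite by \cite{W}. Writing $V_{r,s}:=L_{\text{Vir}}(c_{10,7},h_{10,7}^{r,s})$ for the $27$ irreducible $W$-modules (with $1\le r\le 6$, $1\le s\le 9$ and the identification $V_{r,s}\cong V_{7-r,10-s}$), the classification of irreducible $\widetilde W$-modules will follow once I know how $M$ fuses with every $V_{r,s}$.

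First I would verify that $M=V_{6,1}$ is a simple current and identify the induced permutation. Feeding the triple $((6,1),(r,s),(r'',s''))$ into the admissibility test of Theorem \ref{wang} (with $p=10$, $q=7$, so $r+r'+r''\le 13$ and $s+s'+s''\le 19$), the $r$-constraints $6<r+r''$ and $6+r+r''\le 13$ force $r''=7-r$, while the $s$-constraints $s<1+s''$ and $s''<s+1$ force $s''=s$; both oddness conditions then hold automatically. Hence there is a unique admissible $(r'',s'')$, giving $M\boxtimes_W V_{r,s}=V_{7-r,s}$ for all $(r,s)$. In particular $M\boxtimes_W M=V_{6,1}\boxtimes_W V_{6,1}=V_{1,1}=W$, so $M$ is a self-inverse simple current and the hypotheses of Theorem \ref{thm4} are met; the associated simple current action is $\sigma\colon V_{r,s}\mapsto V_{7-r,s}$.

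Next I would carry out the orbit analysis. Because $V_{r,s}\cong V_{7-r,10-s}$, a class is $\sigma$-fixed precisely when $V_{r,s}\cong V_{7-r,s}$, i.e. when $s=5$; this gives the three self-dual modules $V_{1,5},V_{2,5},V_{3,5}$, and the remaining $24$ classes form $12$ free $\sigma$-orbits $\{V_{r,s},V_{7-r,s}\}$ with $s\ne 5$. By Theorem \ref{thm4} each free orbit yields one irreducible $\widetilde W$-module $V_{r,s}\oplus V_{7-r,s}$, and each fixed point yields two non-isomorphic $\widetilde W$-module structures carried by $V_{r,5}$; since $h_{10,7}^{7-r,5}=h_{10,7}^{r,5}$, these self-dual contributions are exactly the degenerate entries $L_{\text{Vir}}(c_{10,7},h_{10,7}^{r,5})\oplus L_{\text{Vir}}(c_{10,7},h_{10,7}^{7-r,5})$ of the asserted list. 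Choosing $1\le r\le 3$ as a transversal for $r\leftrightarrow 7-r$, and using the identification to see that $(r,s)$ and $(r,10-s)$ label the same module, the family $L_{\text{Vir}}(c_{10,7},h_{10,7}^{r,s})\oplus L_{\text{Vir}}(c_{10,7},h_{10,7}^{7-r,s})$ with $1\le r\le 3$, $1\le s\le 9$ then exhausts all $12+2\cdot 3=18$ irreducible $\widetilde W$-modules, which is the claim.

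The admissibility bookkeeping above is routine; the genuine difficulty is the fixed-point stratum $s=5$. There Theorem \ref{thm4} only asserts the existence of two inequivalent $\widetilde W$-structures on each $V_{r,5}$ without constructing them, and one must confirm that both actually occur and are faithfully recorded by the coincident summands $h_{10,7}^{r,5}=h_{10,7}^{7-r,5}$ in the statement. I would also double-check, via $V_{r,s}\cong V_{7-r,10-s}$, that the reflection $(r,s)\leftrightarrow(r,10-s)$ neither over- nor under-counts the stated family, so that exactly the $18$ modules are obtained with the correct multiplicities.
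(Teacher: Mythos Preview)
Your overall strategy coincides with the paper's: compute $V_{6,1}\boxtimes_W V_{r,s}$ via the admissibility criterion of Theorem~\ref{wang}, recognise $V_{6,1}$ as a simple current sending $V_{r,s}$ to $V_{7-r,s}$, and then invoke Theorem~\ref{thm4}. The paper's proof simply asserts $V_{7-r,s}\ncong V_{r,s}$ for every $(r,s)$ and applies only the first clause of Theorem~\ref{thm4}; it carries out no fixed-point analysis at all.

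Your orbit analysis is more careful than the paper's, and your observation that the Kac-table symmetry $V_{r,s}\cong V_{7-r,10-s}$ forces $V_{r,5}\cong V_{7-r,5}$ is correct (this is visible already in Table~1, where for instance $h_{10,7}^{1,5}=h_{10,7}^{6,5}=\tfrac{11}{5}$): the three classes with $s=5$ \emph{are} fixed by the simple current, so the paper's blanket non-isomorphism claim fails there. However, your attempt to fold these fixed points back into the stated list does not succeed. By the second clause of Theorem~\ref{thm4}, at a fixed point the irreducible $\widetilde W$-module is the single $W$-module $V_{r,5}$ equipped with one of two inequivalent $\widetilde W$-structures, \emph{not} the direct sum $V_{r,5}\oplus V_{7-r,5}\cong V_{r,5}\oplus V_{r,5}$. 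Thus the ``degenerate entries'' $L_{\mathrm{Vir}}(c_{10,7},h_{10,7}^{r,5})\oplus L_{\mathrm{Vir}}(c_{10,7},h_{10,7}^{7-r,5})$ are not irreducible $\widetilde W$-modules; each is the direct sum of the two fixed-point irreducibles. Your count of $18$ irreducibles is right, but the packaging at $s=5$ cannot be made to match the statement by bookkeeping alone, so the worry you flag at the end is a genuine gap in the argument (and in the paper's own formulation), not a routine verification.
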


\begin{proof}
From (\ref{6}),    \( L_{\text{Vir}}(c_{10,7},h_{10,7}^{r,s}) \),  \( 1 \leq r \leq 6 \) and \( 1 \leq s \leq 9 \) exhaust all the irreducible modules of \( L_{\text{Vir}}(c_{10,7},0) \). By Theorem \ref{wang}, we have
\begin{equation}\label{vvvv}
\begin{aligned}
L_{\text{Vir}}(c_{10,7},h_{10,7}^{r,s}) \boxtimes_{\text{Vir}} L_{\text{Vir}}(c_{10,7},h_{10,7}^{6,1}) \cong \bigoplus_{r^{\prime }=1}^{6} \bigoplus_{s^{\prime}=1}^{9} N_{(r,s),(6,1)}^{(r',s')} L_{\text{Vir}}(c_{10.7},h_{10,7}^{r',s'})
\end{aligned}
\end{equation}
and $ N_{(r,s),(6,1)}^{(r',s')} = 1 $ if and only if $ ((r,s),(6,1),(r',s')) $ forms an admissible triple. Thus,  $ N_{(r,s),(6,1)}^{(r',s')} = 1 $ if and only if $ r + r' = 7 $ and $ s = s' $. Therefore, we conclude that $ r \neq r' $ and 
\begin{align}
L_{\text{Vir}}(c_{10,7},h_{10,7}^{r,s}) \boxtimes_{\text{Vir}} L_{\text{Vir}}(c_{10,7},h_{10,7}^{6,1}) \cong L_{\text{Vir}}(c_{10,7},h_{10,7}^{7-r,s}) \ncong L_{\text{Vir}}(c_{10,7},h_{10,7}^{r,s}).
\end{align}

By Theorem \ref{thm4},  all irreducible modules of $L_{\text{Vir}}(c_{10,7},0) \oplus L_{\text{Vir}}(c_{10,7},h_{10,7}^{6,1})$ are given   by $L_{\text{Vir}}(c_{10,7},h_{10,7}^{r,s}) \oplus L_{\text{Vir}}(c_{10,7},h_{10,7}^{7-r,s})$, $1\leq r\leq 3$, $1\leq s\leq 9$.

By Theorem  \ref{thm11111}, $C_{L_{\widehat{osp(1|2)}}(1,0) ^{\otimes 2} }( L_{\widehat{osp(1|2)}}(2,0))=L_{\text{Vir}}(c_{10,7},h_{10,7}^{r,s}) \oplus L_{\text{Vir}}(c_{10,7},h_{10,7}^{7-r,s})$.  Thus the  theorem holds.
\end{proof}

\subsection{Fusion rules of $C( L_{\widehat{osp(1|2)}}(2,0), L_{\widehat{osp(1|2)}}(1,0) ^{\otimes 2})$ }
Assume that $ V $ is a simple rational and $ C_2 $-cofinite vertex operator algebra, and let $ W $ be a simple current module of $ V $ such that $ \widetilde{V} = V \oplus W $ possesses a vertex operator algebra structure. Let $ M = M_{0} \oplus M_{1} $ and $ N = N_{0} \oplus N_{1} $ be two $ \widetilde{V} $-modules. From \cite{CKM}, \cite{KO}, \cite{Li}, we have
\begin{align}\label{e4.1}
    M \boxtimes_{\widetilde{V}} N & = (\widetilde{V} \boxtimes M_{0}) 
    \boxtimes_{\widetilde{V}} (\widetilde{V}  \boxtimes N_{0})= \widetilde{V} \boxtimes(M_{0} \boxtimes N_{0} ).
\end{align}

 For simplicity, we denote 
 $$\widetilde{V}=C( L_{\widehat{osp(1|2)}}(2,0), L_{\widehat{osp(1|2)}}(1,0) ^{\otimes 2})=L_{\text{Vir}}(c_{10,7},0) \oplus L_{\text{Vir}}(c_{10,7},h_{10,7}^{6,1}),$$
 and
 $$\widetilde{V}_{r,s}=L_{\text{Vir}}(c_{10,7},h_{10,7}^{r,s}) \oplus L_{\text{Vir}}(c_{10,7},h_{10,7}^{7-r,s}),$$
 where  $r=1,3,5$, $1\leq s\leq 9$.
 
   The fusion rules for irreducible modules of $\widetilde{V}$ are given in the theorem below.
\begin{theorem}
 For $1\leq r,r'\leq 3$, $1\leq s, s'\leq 9$, we have
\begin{equation} \widetilde{V}_{r,s}\boxtimes_{\widetilde{V}}\widetilde{V}_{r',s'}  \\
=\bigoplus_{r^{\prime \prime}=1,odd}^{5} \bigoplus_{s^{\prime \prime}=1}^{9}N_{(r,s),(r',s')}^{(r'',s'')}\widetilde{V}_{r'',s''},
\end{equation}
where $ N_{(r,s),(r',s')}^{(r'',s'')} =1 $ if and only if  $((r,s),(r',s'),(r'',s''))$ is an admissible triple.
\end{theorem}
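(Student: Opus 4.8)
The plan is to reduce the fusion product over $\widetilde V$ to a fusion product over the Virasoro subalgebra $V=L_{\text{Vir}}(c_{10,7},0)$ by means of the simple current extension formula (\ref{e4.1}), and then to read the answer off from Wang's theorem (Theorem \ref{wang}). Throughout, $\boxtimes_{\text{Vir}}$ denotes fusion over $V$ (the operation written $\boxtimes$ in (\ref{e4.1})), and I use that $\widetilde V=V\oplus W$ with $W=L_{\text{Vir}}(c_{10,7},h_{10,7}^{6,1})$ a simple current, as established in the proof of Theorem \ref{thmf} via $W\boxtimes_{\text{Vir}}L_{\text{Vir}}(c_{10,7},h_{10,7}^{r,s})\cong L_{\text{Vir}}(c_{10,7},h_{10,7}^{7-r,s})$, together with the rationality and $C_2$-cofiniteness of $V$. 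By Theorem \ref{thmf} each irreducible $\widetilde V$-module $\widetilde V_{r,s}$ is induced from its degree-zero component, i.e. $\widetilde V_{r,s}=\widetilde V\boxtimes_{\text{Vir}}L_{\text{Vir}}(c_{10,7},h_{10,7}^{r,s})$, so one may take $(\widetilde V_{r,s})_0=L_{\text{Vir}}(c_{10,7},h_{10,7}^{r,s})$.

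First I would apply (\ref{e4.1}) with $M=\widetilde V_{r,s}$, $N=\widetilde V_{r',s'}$ and $M_0,N_0$ their degree-zero components, obtaining
\begin{equation}
\widetilde V_{r,s}\boxtimes_{\widetilde V}\widetilde V_{r',s'}
=\widetilde V\boxtimes_{\text{Vir}}\bigl(L_{\text{Vir}}(c_{10,7},h_{10,7}^{r,s})\boxtimes_{\text{Vir}}L_{\text{Vir}}(c_{10,7},h_{10,7}^{r',s'})\bigr).
\end{equation}
Next I would evaluate the inner Virasoro fusion by Theorem \ref{wang}, obtaining $\bigoplus_{t,u}N_{(r,s),(r',s')}^{(t,u)}L_{\text{Vir}}(c_{10,7},h_{10,7}^{t,u})$ over $1\le t\le 6$, $1\le u\le 9$, and then apply the induction functor termwise, using $\widetilde V\boxtimes_{\text{Vir}}L_{\text{Vir}}(c_{10,7},h_{10,7}^{t,u})=\widetilde V_{t,u}$.

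The step I expect to be the main obstacle is the combinatorial bookkeeping needed to descend from the $V$-level labels $(t,u)$ to the $\widetilde V$-level labels, since $\widetilde V_{t,u}\cong\widetilde V_{7-t,u}$ means the two $V$-modules $L_{\text{Vir}}(c_{10,7},h_{10,7}^{t,u})$ and $L_{\text{Vir}}(c_{10,7},h_{10,7}^{7-t,u})$ induce to the same $\widetilde V$-module; a priori this could double the multiplicity of $\widetilde V_{r'',s''}$ to $N_{(r,s),(r',s')}^{(r'',s'')}+N_{(r,s),(r',s')}^{(7-r'',s'')}$. The observation that removes the doubling is the parity constraint in the definition of admissibility: a triple can be admissible only if $r+r'+r''$ is odd, and since $t$ and $7-t$ have opposite parities, at most one of $N_{(r,s),(r',s')}^{(t,u)}$ and $N_{(r,s),(r',s')}^{(7-t,u)}$ is nonzero, while the condition on $s+s'+s''$ is unchanged under $t\mapsto 7-t$ and so carries over unaffected. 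Hence exactly one representative of each pair $\{t,7-t\}$ survives, and the coefficient of $\widetilde V_{r'',s''}$ is a single Virasoro fusion number rather than a sum. Expressing the surviving terms in terms of the odd representatives $r''\in\{1,3,5\}$ via $\widetilde V_{t,u}\cong\widetilde V_{7-t,u}$, and using that the input labels in the definition of $\widetilde V_{r,s}$ are odd (so that $r+r'$ is even and the admissible representative of each output pair is the odd one), one obtains precisely the coefficient $N_{(r,s),(r',s')}^{(r'',s'')}$ of the statement. The only genuine verification is this parity dichotomy and its consistency with the labelling convention.
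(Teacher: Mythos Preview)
Your proposal is correct and follows essentially the same route as the paper: reduce to Virasoro fusion via (\ref{e4.1}), apply Theorem \ref{wang}, and then induce back up. The only difference is that the paper handles the $r$-labels case by case (computing $\{r,r'\}=\{3,3\},\{5,5\},\{3,5\}$ explicitly), while you replace this with the unified parity observation that admissibility forces $r+r'+r''$ odd, so exactly one of $\{t,7-t\}$ can occur; this is a cosmetic streamlining of the same argument.
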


\begin{proof} It is enough to prove the theorem for the cases that $r=r'=3$, $r=r'=5$, and $r=3$, $r'=5$.   
	By (\ref{e4.1}), 
	\begin{align*}& \widetilde{V}_{5,s}\boxtimes_{\widetilde{V}}\widetilde{V}_{5,s'}\\
		=& (L_{\text{Vir}}(c_{10,7},h_{10,7}^{2,s}) \oplus L_{\text{Vir}}(c_{10,7},h_{10,7}^{5,s})) \boxtimes_{\widetilde{V}} (L_{\text{Vir}}(c_{10,7},h_{10,7}^{2,s'}) \oplus L_{\text{Vir}}(c_{10,7},h_{10,7}^{5,s'})) \\
		=&  (L_{\text{Vir}}(c_{10,7},0) \oplus L_{\text{Vir}}(c_{10,7},h_{10,7}^{6,1}))  \boxtimes ( L_{\text{Vir}}(c_{10,7},h_{10,7}^{2,s}) \boxtimes  L_{\text{Vir}}(c_{10,7},h_{10,7}^{2,s'})) \\
		=&  (L_{\text{Vir}}(c_{10,7},0) \oplus L_{\text{Vir}}(c_{10,7},h_{10,7}^{6,1}))  \boxtimes[\bigoplus_{s''=1}^{9}  N_{s,s'}^{s''} (L_{\text{Vir}}(c_{10,7},h_{10,7}^{1,s''}) \oplus L_{\text{Vir}}(c_{10,7},h_{10,7}^{3,s''}))]\\
	=	& (\bigoplus_{s^{\prime \prime}=1}^{9}N_{s,s'}^{s''} \widetilde{V}_{1,s''} ) \bigoplus (\bigoplus_{s^{\prime \prime}=1}^{9}N_{s,s'}^{s''} \widetilde{V}_{3,s''})=\bigoplus_{r^{\prime \prime}=1,odd}^{5} (\bigoplus_{s^{\prime \prime}=1}^{9}N_{(5,s),(5,s')}^{(r'',s'')}\widetilde{V}_{r'',s''}),
	\end{align*}
	where $N_{s,s'}^{s''} =1$  if $\{s, s', s''\}$ is an admissible triple, i.e., $1\leq s,s',s''\leq 9$,  $s''<s+s'$, $s'<s+s''$, $s<s'+s''$, $s+s'+s''$ is odd, and $s+s'+s''\leq 19$.  Otherwise,  $N_{s,s'}^{s''} =0$.  Similarly, 
	\begin{align*}& \widetilde{V}_{3,s}\boxtimes_{\widetilde{V}}\widetilde{V}_{3,s'}\\
	=& (L_{\text{Vir}}(c_{10,7},h_{10,7}^{3,s}) \oplus L_{\text{Vir}}(c_{10,7},h_{10,7}^{4,s})) \boxtimes_{\widetilde{V}} (L_{\text{Vir}}(c_{10,7},h_{10,7}^{3,s'}) \oplus L_{\text{Vir}}(c_{10,7},h_{10,7}^{4,s'})) \\
	=& (L_{\text{Vir}}(c_{10,7},0) \oplus L_{\text{Vir}}(c_{10,7},h_{10,7}^{6,1}))  \boxtimes ( L_{\text{Vir}}(c_{10,7},h_{10,7}^{3,s}) \boxtimes  L_{\text{Vir}}(c_{10,7},h_{10,7}^{3,s'})) \\
	=& (L_{\text{Vir}}(c_{10,7},0) \oplus L_{\text{Vir}}(c_{10,7},h_{10,7}^{6,1}))  \boxtimes[\bigoplus_{s''=1}^{9}  N_{s,s'}^{s''} (L_{\text{Vir}}(c_{10,7},h_{10,7}^{1,s''})\\
	& \oplus L_{\text{Vir}}(c_{10,7},h_{10,7}^{3,s''}) \oplus L_{\text{Vir}}(c_{10,7},h_{10,7}^{5,s''}))]\\
	=& (\bigoplus_{s^{\prime \prime}=1}^{9}N_{s,s'}^{s''} \widetilde{V}_{1,s''} ) \bigoplus (\bigoplus_{s^{\prime \prime}=1}^{9}N_{s,s'}^{s''} \widetilde{V}_{3,s''})\bigoplus (\bigoplus_{s^{\prime \prime}=1}^{9}N_{s,s'}^{s''} \widetilde{V}_{5,s''})\\
=&\bigoplus_{r^{\prime \prime}=1,odd}^{5} (\bigoplus_{s^{\prime \prime}=1}^{9}N_{(3,s),(3,s')}^{(r'',s'')}\widetilde{V}_{r'',s''}),
\end{align*}
and
	\begin{align*}& \widetilde{V}_{3,s}\boxtimes_{\widetilde{V}}\widetilde{V}_{5,s'}\\
	=& (L_{\text{Vir}}(c_{10,7},h_{10,7}^{3,s}) \oplus L_{\text{Vir}}(c_{10,7},h_{10,7}^{4,s})) \boxtimes_{\widetilde{V}} (L_{\text{Vir}}(c_{10,7},h_{10,7}^{5,s'}) \oplus L_{\text{Vir}}(c_{10,7},h_{10,7}^{2,s'})) \\
	=& (L_{\text{Vir}}(c_{10,7},0) \oplus L_{\text{Vir}}(c_{10,7},h_{10,7}^{6,1}))  \boxtimes ( L_{\text{Vir}}(c_{10,7},h_{10,7}^{3,s}) \boxtimes  L_{\text{Vir}}(c_{10,7},h_{10,7}^{5,s'})) \\
	=& (L_{\text{Vir}}(c_{10,7},0) \oplus L_{\text{Vir}}(c_{10,7},h_{10,7}^{6,1}))  \boxtimes[\bigoplus_{s''=1}^{9}  N_{s,s'}^{s''} (L_{\text{Vir}}(c_{10,7},h_{10,7}^{3,s''}) \oplus L_{\text{Vir}}(c_{10,7},h_{10,7}^{5,s''})]\\
	=& (\bigoplus_{s^{\prime \prime}=1}^{9}N_{s,s'}^{s''} \widetilde{V}_{3,s''} ) \bigoplus (\bigoplus_{s^{\prime \prime}=1}^{9}N_{s,s'}^{s''} \widetilde{V}_{5,s''})\\
	=&\bigoplus_{r^{\prime \prime}=1,odd}^{5}( \bigoplus_{s^{\prime \prime}=1}^{9}N_{(3,s),(5,s')}^{(r'',s'')}\widetilde{V}_{r'',s''}).
\end{align*}

\end{proof}

\end{document}